\newtheorem{theorem}{Theorem}
\newtheorem{lemma}{Lemma}
\newcommand{\se}{ {\sigma}_{\mbox{\rm eff}}}
\begin{document}

\bibliographystyle{plain}

\title{Branching Random Walks in Time Inhomogeneous Environments}
%\author{Ming Fang\thanks{} \and Ofer Zeitouni\thanks{}}
\author{Ming Fang\thanks{School of Mathematics, University of Minnesota,
206 Church St. SE, Minneapolis, MN 55455, USA.
The work of this author was partially
supported by NSF grant DMS-0804133}
\and
Ofer Zeitouni\thanks{School of Mathematics, University of Minnesota,
206 Church St. SE, Minneapolis, MN 55455, USA and Faculty of Mathematics,
Weizmann
Institute, POB 26, Rehovot 76100, Israel.
The work of this author was partially
supported by NSF grant DMS-0804133, a grant from the Israel science foundation, and the Taubman professorial chair at the Weizmann
Institute.
}}

\date{November 18, 2011}
\maketitle

{\abstract We study the maximal displacement of branching random
walks in a class of time inhomogeneous environments.
Specifically, binary branching random walks with Gaussian increments
will be considered, where the variances of the increments change over time
macroscopically.
We find the asymptotics of the maximum up to an $O_P(1)$ (stochastically bounded) error, and focus on
the following phenomena: the profile of the variance matters, both to the
leading (velocity) term and to the logarithmic correction term, and the latter
exhibits a phase transition.}

\section{Introduction}

Branching random walks and their maxima
have been
studied mostly in space-time
homogeneous environments (deterministic or random). For work on the
deterministic homogeneous case of relevance to our
study we refer to \cite{Bramson78_BBM} and the recent
\cite{Addario-BerryReed09} and  \cite{Aidekon10}.
For the random environment
case, a sample of relevant papers is
%
%that is, the laws governing the randomness of the model stay the same. Also there were papers dealing with branching random walks in random environments, for example,
\cite{GantertMullerPopovVachkovskaia10, GrevenHollander92, HeilNakashimaYoshida11, HuYoshida09, Liu07, MachadoPopov00, Nakashima11}. As is well documented in these references,
under reasonable hypotheses, in the homogeneous case
the maximum grows linearly, with a logarithmic correction,
and
is tight around its median.

Branching random walks are also studied under some space inhomogeneous environments. A sample of those papers are \cite{BerestyckiBrunetHarrisHarris10, DoeringRoberts11, EnglanderHarrisKyprianou10, GitHarrisHarris07, HarrisHarris09, HarrisWilliams96, Koralov11}.

Recently, Bramson and Zeitouni \cite{BramsonZeitouni09} and Fang \cite{Fang11} showed
that the maxima of branching random walks, recentered around their median,
are still tight in time
inhomogeneous environments satisfying certain uniform regularity assumptions,
in particular,  the laws of the increments
can vary with respect to time and the walks may have some local dependence.
A natural question is to ask, in that situation,
what is the asymptotic behavior of the maxima.
Similar questions were discussed in the context of branching Brownian
motion using PDE techniques,
%have also been studied in the field of partial differential equations,
see e.g. Nolen and Ryzhik \cite{NolenRyzhik09},
using the fact that
the distributions of the maxima satisfy the KPP equation
whose solution exhibits a traveling wave phenomenon.

In all these models,
while the linear traveling speed of the maxima is a relatively easy
consequence of the large deviation principle,
the evaluation of the
second order correction term, like the ones in Bramson \cite{Bramson78_BBM}
and Addario-Berry and Reed \cite{Addario-BerryReed09}, is more involved and requires a
detailed analysis of the walks; to our knowledge, it has so far
only been performed
in the time homogeneous case.

Our goal is to start exploring the time inhomogeneous setup.
As we will detail below, the situation, even in the simplest setting, is
complex and, for example, the order in which inhomogeneity presents itself
matters, both in the leading term and in the correction term.

In this paper, in order to best describe the phenomenon discussed above,
we focus on the  simplest case of binary branching random walks
where the diffusivity of the particles
takes two distinct values as a function of time.

We now describe the setup in detail.
%We consider branching random walks
%whose increments are not always identically distributed. In particular,
For $\sigma>0$, let
$N(0,\sigma^2)$ denote the
 normal distributions with mean zero and variance $\sigma^2$.
Let $n$ be an integer, and let $\sigma_1^2,\sigma_2^2>0$ be given. \
We start the system with one particle at location 0 at time 0.
Suppose that $v$ is a particle at location $S_v$ at time $k$.
Then $v$ dies at time $k+1$ and gives birth to two particles $v1$ and $v2$,
and each of the two offspring ($\{vi,i=1,2\}$) moves independently to a
new location $S_{vi}$ with the increment $S_{vi}-S_{v}$ independent
of $S_v$ and
distributed as $N(0,\sigma_1^2)$ if $k<n/2$ and as
$N(0,\sigma_2^2)$ if $n/2\leq k<n$.
Let $\mathds{D}_n$ denote the collection
of all particles at time $n$. For a particle
$v\in \mathds{D}_n$ and $i<n$, we let
$v^i$ denote
the $i$th level ancestor of $v$, that is the unique
element of $\mathds{D}_i$ on the geodesic connecting $v$ and the root.
We study the maximal displacement $M_n=\max_{v\in\mathds{D}_n}S_v$ at
time $n$, for $n$ large. \footnote[1]{Since one can understand a branching random walk as a `competition' between branching and random walk, one may get similar results by fixing the variance and changing the branching rate with respect to time.}

It will be clear that the analysis extends to a wide class of inhomogeneities
with finitely many values and `macroscopic' change (similar to the description in the previous paragraph), and to the Galton-Watson
setup.
%We show that the order in which the different diffusivities appear matters, and the %behavior
%of the maxima is very different in the cases where large variances of the increments
%precede, or follow, smaller variances.
A
%and the branching random walks can behave quite differently. This analysis sheds some light on branching random walks with finitely many different environments or even infinitely many environments, while a
universal result that will allow for continuous change of the variances
is more complicated, is expected to present different correction
terms, and is the subject of further study.

In order to describe the results in a concise way, we recall the notation $O_P(1)$ for stochastically boundedness. That is, if a sequence of random variables $R_n$ satisfies $R_n=O_P(1)$, then, for any $\epsilon>0$, there exists an $M$ such that $P(|R_n|> M)<\epsilon$ for all $n$.

An interesting feature of $M_n$ is that the asymptotic behavior
%does not possess a closed form, rather, it
depends on the order
relation between $\sigma_1^2$ and $\sigma_2^2$. That is, while
\begin{equation}\label{uni_max}
M_n=\left(\sqrt{2\log 2}\;\se\right)n-\beta \frac{\se}{\sqrt{2\log 2}}\log n +O_P(1)
\end{equation}
is true for some choice of $\se$ and $\beta$, $\se$ and $\beta$ take different
expressions for different ordering
of $\sigma_1$ and $\sigma_2$. Note that \eqref{uni_max} is equivalent to say that the sequence $\{M_n-Med(M_n)\}_n$ is tight and
$$Med(M_n)=\left(\sqrt{2\log 2}\;\se\right)n-\beta \frac{\se}{\sqrt{2\log 2}}\log n +O(1),$$
where $Med(X)=\sup\{x:P(X\leq x)\leq \frac{1}{2}\}$ is the median of the random variable $X$. In the following, we will use superscripts to distinguish different cases, see \eqref{eqvar}, \eqref{inc} and \eqref{dec} below.

A special and well-known case is when $\sigma_1=\sigma_2=\sigma$, i.e.,
all the increments are i.i.d.. In that case,
the maximal displacement is described as follows:
%know to be
\begin{equation}\label{eqvar}
M_n^==\left(\sqrt{2\log 2}\;\sigma\right) n-\frac{3}{2}\frac{\sigma}{\sqrt{2\log 2}}\log n+O_P(1);
\end{equation}
the proof can be found in \cite{Addario-BerryReed09}, and its analog for branching Brownian motion can be found in \cite{Bramson78_BBM} using probabilistic techniques and \cite{Lau85} using PDE techniques. This homogeneous case corresponds to \eqref{uni_max} with $\se=\sigma$ and $\beta=\frac{3}{2}$. In this paper, we deal with the extension to the inhomogeneous case. The main results are the following two theorems.

\begin{theorem}\label{th_inc}
 When $\sigma_1^2<\sigma_2^2$ (increasing variances), the maximal displacement is
 \begin{equation}\label{inc}
 M_n^{\uparrow}=\left(\sqrt{(\sigma_1^2+\sigma_2^2)\log 2}\right)n-\frac{\sqrt{\sigma_1^2+\sigma_2^2}}{4\sqrt{\log 2}}\log n+O_P(1),
 \end{equation}
 which is of the form
 \eqref{uni_max} with $\se=\sqrt{\frac{\sigma_1^2+\sigma_2^2}{2}}$ and $\beta=\frac{1}{2}$.
\end{theorem}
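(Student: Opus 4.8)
The plan is to prove matching upper and lower bounds for $M_n^{\uparrow}$; throughout write $v=\sqrt{(\sigma_1^2+\sigma_2^2)\log 2}$ and $g(n)=\frac{\sqrt{\sigma_1^2+\sigma_2^2}}{4\sqrt{\log 2}}\log n$, so that \eqref{inc} reads $M_n^{\uparrow}=vn-g(n)+O_P(1)$. The guiding heuristic is a first-moment optimization over the position at time $n/2$: since for $v\in\mathds{D}_n$ one has $S_v=S_{v^{n/2}}+(S_v-S_{v^{n/2}})$ with independent summands $N(0,\sigma_1^2n/2)$ and $N(0,\sigma_2^2n/2)$, the expected number of particles at time $n$ near $an$ whose ancestor at time $n/2$ is near $bn$ is of exponential order $\exp\!\big(n[\log 2-\tfrac{b^2}{\sigma_1^2}-\tfrac{(a-b)^2}{\sigma_2^2}]\big)$, which is maximized at $b^*=\tfrac{\sigma_1^2}{\sigma_1^2+\sigma_2^2}a$ and yields velocity $v$. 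The key structural point, valid precisely because $\sigma_1^2<\sigma_2^2$, is that the optimal half-time position $y^*:=b^*n$ with $a=v$ travels at velocity $\tfrac{2\sigma_1^2 v}{\sigma_1^2+\sigma_2^2}<\sqrt{2\log 2}\,\sigma_1$, i.e.\ strictly inside the bulk of the first-phase subtree; consequently there are $e^{cn(1+o(1))}$ particles near $y^*$ at time $n/2$, with $c=\tfrac{(\sigma_2^2-\sigma_1^2)\log 2}{2(\sigma_1^2+\sigma_2^2)}>0$. This exponential reservoir is what makes the ballot/barrier mechanism that forces $\beta=\tfrac32$ in the homogeneous case \eqref{eqvar} non-binding here, leaving the pure Gaussian-normalization value $\beta=\tfrac12$.

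For the \emph{upper bound} I would use only the union bound: $P(M_n\geq z)\leq 2^n\,P\!\big(N(0,(\sigma_1^2+\sigma_2^2)n/2)\geq z\big)$. With $z=vn-g(n)+\omega$, the factor $2^n$ cancels the leading exponential in the Gaussian tail exactly (by the choice of $v$), the residual $1/\sqrt n$ from the Gaussian normalization is cancelled by the coefficient $\tfrac14$ in $g(n)$, and what remains is $\leq C e^{-c\omega}$ for constants $C,c>0$. Hence $\mathrm{Med}(M_n)\leq vn-g(n)+O(1)$ and the right tail of $M_n-\mathrm{Med}(M_n)$ is tight; no restriction of the ancestral path is needed, in contrast with the homogeneous case.

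The \emph{lower bound} is the heart of the matter and I would carry it out in two stages. Stage~1: a law-of-large-numbers estimate (a first- and second-moment computation for the phase-1 branching random walk) shows that with probability tending to $1$ there are at least $e^{cn(1+o(1))}$ particles at time $n/2$ in a bounded window around $y^*$, and more precisely with the correct sub-exponential local profile. Stage~2: conditionally on such a configuration, these particles are the roots of conditionally independent branching random walks with increment variance $\sigma_2^2$ and length $n/2$; I would run a truncated second-moment argument over the time-$n$ particles with $S_v\geq vn-g(n)-\omega$ whose ancestral line is pinned near $y^*$ at time $n/2$ (and, if needed, near the interpolating values at finitely many further intermediate times), and show $E[\#^2]\lesssim E[\#]^2$, so that Paley--Zygmund gives $P(M_n\geq vn-g(n)-\omega)\geq \delta>0$. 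Finally, running this argument inside each of the $2^{k_0}$ subtrees rooted at a large constant generation $k_0$ and using their independence upgrades this to probability tending to $1$ as $\omega\to\infty$, which together with the upper bound gives $\mathrm{Med}(M_n)=vn-g(n)+O(1)$ and tightness of $M_n-\mathrm{Med}(M_n)$, i.e.\ \eqref{inc}.

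The main obstacle is the Stage~2 second-moment estimate. One must split pairs of time-$n$ particles according to their branching generation — pairs splitting within phase 1 and pairs splitting within phase 2 have genuinely different behavior — and verify that the worst splitting times contribute only $O(E[\#]^2)$; this is exactly where the non-bindingness of the barrier is exploited, and where the precise polynomial (sub-exponential) corrections must be tracked, since a crude variant that restarts phase 2 from a single point $y^*$ loses a factor and would only give the coefficient $\tfrac12$ rather than the correct $\tfrac14$. Matching the local profile of the phase-1 cloud near $y^*$ obtained in Stage~1 with what Stage~2 consumes is the other delicate point; here the regularity/tightness results of \cite{BramsonZeitouni09, Fang11} and the homogeneous estimates of \cite{Addario-BerryReed09} are the natural inputs.
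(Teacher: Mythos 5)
Your upper bound is exactly the paper's: a plain union bound $2^nP(S_n\geq a_n+y)\leq Ce^{-cy}$ with no path restriction, and your overall skeleton (first moment for the upper bound, a localized second moment for the lower bound, then an amplification step to get $O_P(1)$) also matches; the heuristics about the optimal half-time position $y^*=\frac{\sigma_1^2}{\sigma_1^2+\sigma_2^2}a_n$ sitting strictly inside the bulk of the phase-one cloud, with an $e^{cn}$ reservoir of particles there, are correct and are essentially the content of Subsection \ref{sec_ldp}. The gap is in your Stage~2. Pinning the ancestral line only at time $n/2$ does not control the second moment: for pairs splitting at time $k=\theta n$, $\theta\in(1/2,1)$, the common ancestor is free to sit at $y^*+\delta$ with $\delta$ of order $n$, and optimizing its Gaussian cost against the gain for the two descendant branches gives an expected number of pairs of order $2^{nh(\theta)+o(n)}$ with $h(\theta)=2-\theta-\frac{\sigma_1^2}{\sigma_1^2+\sigma_2^2}-\frac{\sigma_2^2}{(\sigma_1^2+\sigma_2^2)\theta}$. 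This $h$ is concave, vanishes at $\theta=1$, and is maximized at $\theta_0=\sigma_2/\sqrt{\sigma_1^2+\sigma_2^2}\in(1/2,1)$ where $h(\theta_0)>0$, so $EN^2/(EN)^2$ blows up exponentially (a similar computation rules out the completely unconstrained second moment on $[0,n/2]$ whenever $\sigma_2^2<3\sigma_1^2$). Your hedge of adding ``finitely many further intermediate'' pins does not repair this: a deviation $\delta$ at a splitting time in the middle of a pin interval of length $\epsilon n$ buys a gain linear in $\delta$ at a bridge cost of only $\delta^2/(C\epsilon n)$, so each interval still contributes a factor $e^{c\epsilon n}$, which need not be beaten by the good geometric decay when $\sigma_2^2-\sigma_1^2$ is small. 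In short, the barrier is ``non-binding'' only for the first moment; the second moment still requires localization at \emph{every} intermediate time.

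What the paper does instead is to define $N_{2,n}$ by constraining the whole ancestral path to a tube of sublinear width $f_{k,n}=c_f\min(k,n-k)^{2/3}$ around the linear interpolation $s_{k,n}(S_n)$ of \eqref{intPos}. Lemma \ref{lem_bigfluc} (a Brownian-bridge-type independence plus a Gaussian tail sum) shows this tube event has conditional probability bounded below uniformly in $S_n$, so the first moment only loses a constant, while the tube forces the split point to within $o(k)$ of the first-moment-optimal position and makes the sum over splitting times converge geometrically with ratio $2^{(\sigma_1^2-\sigma_2^2)/(\sigma_1^2+\sigma_2^2)}<1$, as in \eqref{incLBsec}. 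This single global second moment also sidesteps the profile-matching difficulty you flag in your two-stage decomposition. Your final amplification over $2^{k_0}$ subtrees is a legitimate alternative to the paper's appeal to the tightness Lemma \ref{lem_tight}, but the core of the lower bound needs the full tube, not a half-time pin.
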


\begin{theorem}\label{th_dec}
  When $\sigma_1^2>\sigma_2^2$ (decreasing variances), the maximal displacement is
  \begin{equation}\label{dec}
  M_n^{\downarrow}=\frac{\sqrt{2\log 2}(\sigma_1+\sigma_2)}{2}n-\frac{3(\sigma_1+\sigma_2)}{2\sqrt{2\log 2}}\log n+O_P(1),
  \end{equation}
  which is of the form \eqref{uni_max} with $\se=\frac{\sigma_1+\sigma_2}{2}$ and $\beta=3$.
\end{theorem}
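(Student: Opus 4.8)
The plan is to establish the two matching bounds in \eqref{dec}, organized around the heuristic that, when the variances decrease, an extremal particle should travel at the homogeneous critical speed $\sqrt{2\log 2}\,\sigma_1$ during the first $n/2$ steps and at the homogeneous critical speed $\sqrt{2\log 2}\,\sigma_2$ during the last $n/2$ steps, each half paying, essentially independently, its own logarithmic price. Throughout write $\alpha=\sqrt{2\log 2}$, $m=n/2$, and let $\mu^{(i)}_m=\alpha\sigma_i m-\tfrac32\tfrac{\sigma_i}{\alpha}\log m$ be the centering of the maximal displacement of a homogeneous binary branching random walk of length $m$ with Gaussian increments of variance $\sigma_i^2$, which by \eqref{eqvar} equals $\mu^{(i)}_m+O_P(1)$; after collecting the $\log 2$ terms, $\mu^{(1)}_m+\mu^{(2)}_m$ coincides with the right-hand side of \eqref{dec} up to $O(1)$.

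\emph{Lower bound.} I would condition on $\mathcal F_{n/2}$, the first half, and fix $\epsilon>0$. By \eqref{eqvar} applied to the first half there is $K_\epsilon$ so that with probability at least $1-\epsilon$ some $u\in\mathds{D}_{n/2}$ has $S_u\ge\mu^{(1)}_m-K_\epsilon$; fix such a $u$, say the left-most. Conditionally on $\mathcal F_{n/2}$ the subtree of $u$ over the last $m$ steps is a homogeneous binary branching random walk with increment variance $\sigma_2^2$, independent of $\mathcal F_{n/2}$, so by \eqref{eqvar} its maximal displacement exceeds $\mu^{(2)}_m-K_\epsilon$ with conditional probability at least $1-\epsilon$. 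On the intersection, of probability at least $(1-\epsilon)^2$, some descendant $v$ of $u$ satisfies $M_n\ge S_u+(S_v-S_u)\ge\mu^{(1)}_m+\mu^{(2)}_m-2K_\epsilon$; letting $\epsilon\downarrow 0$ yields the lower half of \eqref{dec}.

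\emph{Upper bound.} Since $|\mathds{D}_{n/2}|=2^{n/2}$ is deterministic, $M_n=\max_{u\in\mathds{D}_{n/2}}(S_u+\widetilde M^u)$, where, given $\mathcal F_{n/2}$, the $\widetilde M^u$ are i.i.d.\ copies of the maximal displacement of a homogeneous length-$m$, variance-$\sigma_2^2$ branching random walk and are independent of $\{S_u\}$; write $\widetilde M^u=\mu^{(2)}_m+\widetilde R^u$. It thus suffices to show $\max_u(S_u+\widetilde R^u)\le\mu^{(1)}_m+O_P(1)$. Fix $\epsilon>0$ and choose a ``good event'' $G$ of probability at least $1-\epsilon$ on which (i) $S_u\le\mu^{(1)}_m+C_1$ for all $u\in\mathds{D}_{n/2}$, and (ii) $\#\{u\in\mathds{D}_{n/2}:S_u\ge\mu^{(1)}_m-y\}\le\phi(y)e^{(\alpha/\sigma_1)y}$ for every $y\ge 0$, with $\phi$ a fixed polynomial \emph{not depending on $n$}. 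Using the uniform right-tail bound $\bar F(z):=P(\widetilde R^u\ge z)\le c\,(1+z_+)\,e^{-(\alpha/\sigma_2)z}$ of the type in \cite{Bramson78_BBM,Addario-BerryReed09}, a union bound gives, for $C>C_1$,
\[
P\bigl(\textstyle\max_u(S_u+\widetilde R^u)\ge\mu^{(1)}_m+C,\ G\bigr)\le E\Bigl[\mathbf 1_G\sum_{u\in\mathds{D}_{n/2}}\bar F\bigl(C-(S_u-\mu^{(1)}_m)\bigr)\Bigr]\le c'\,(1+C)\,e^{-(\alpha/\sigma_2)C},
\]
the last step rewriting the sum as a Stieltjes integral against the counting function in (ii) and using that $\int_0^\infty\phi(y)e^{(\alpha/\sigma_1)y}(1+C+y)e^{-(\alpha/\sigma_2)(C+y)}\,dy<\infty$ because $\alpha/\sigma_1<\alpha/\sigma_2$ --- an inequality equivalent to the standing hypothesis $\sigma_1>\sigma_2$, and the only place it enters. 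Since $\epsilon$ is arbitrary and $C$ can be taken large, $M_n\le\mu^{(1)}_m+\mu^{(2)}_m+O_P(1)$, which with the lower bound gives \eqref{dec}.

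\emph{The main obstacle.} Everything rests on (ii): that on a high-probability event the first-half front count at depth $y$ is of order $e^{(\alpha/\sigma_1)y}$ up to a polynomial factor, \emph{with no factor of $m$}. A plain first moment does not suffice: the unrestricted bound $E\,\#\{v\in\mathds{D}_n:S_v\ge x\}=2^n P\bigl(N(0,(\sigma_1^2+\sigma_2^2)m)\ge x\bigr)$ only reproduces the (larger) linear growth rate of Theorem~\ref{th_inc}, and even after truncating on the first-half maximum the mean front count carries an extra factor $\asymp m$, coming from the rare overshoot of that maximum, which fed through the estimate above would inflate the logarithmic correction by $\tfrac{\sigma_2}{\alpha}\log n$. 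Removing it is a Bramson-type step: one also imposes that every ancestral path in the first half stay below a piecewise-linear barrier of slope $\alpha\sigma_1$ with an $a\log k$ correction, $a$ chosen so that a first moment over times $k\le n/2$ shows the barrier is a.s.\ obeyed, after which a ballot/entropic-repulsion estimate delivers (ii) at the correct, $n$-free size. Carrying this through --- together with the gluing at time $n/2$, where the barrier bends \emph{concavely} so that the ballot estimate decouples into the two homogeneous problems, each contributing a $\tfrac32$-logarithmic term in the variable $m$ whose sum is exactly \eqref{dec} --- and verifying the required uniformity in $m$ of the homogeneous inputs used as black boxes, is where essentially all the work lies.
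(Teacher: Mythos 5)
Your lower bound is exactly the paper's: apply the homogeneous result \eqref{eqvar} on $[0,n/2]$ with variance $\sigma_1^2$, then again to the subtree of a near-maximal particle at time $n/2$ with variance $\sigma_2^2$; that part is correct as written. Your upper bound takes a genuinely different route. The paper runs a single first-moment sweep over all of $[0,n]$: it erects the barrier $M(k)+f(k)$, stops each particle at its first crossing time $\tau_v$, splits $k$ into four regimes, and controls each with a Gaussian endpoint estimate plus the ballot-type Brownian-bridge bound of Lemma \ref{lem_bb}. You instead factor at time $n/2$, writing $M_n=\max_u(S_u+\widetilde M^u)$, and reduce everything to two homogeneous inputs: (a) a uniform-in-$m$ sharp right tail $\bar F(z)\le c(1+z_+)e^{-(\alpha/\sigma_2)z}$ (writing $\alpha=\sqrt{2\log 2}$ as you do) for the recentered length-$m$ maximum, and (b) your front-count bound (ii). Your convergence computation --- the only place $\sigma_1>\sigma_2$ enters --- is correct and is the exact analogue of the paper's observation in its regime (iii) that $\int_{-\infty}^{y}(1+2y-x)^2e^{(\sqrt{2\log 2}/\sigma_2-\sqrt{2\log 2}/\sigma_1)x}dx$ converges because $\sigma_2<\sigma_1$.

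The trade-off is that both (a) and (b) are themselves Bramson-type barrier-plus-ballot results, not consequences of \eqref{eqvar} as stated. For (a), the unrestricted first moment only yields $C\,m\,e^{-(\alpha/\sigma_2)z}$, and the spurious factor $m$ would shift the answer by $\tfrac{\sigma_2}{\alpha}\log m$ and corrupt the coefficient of $\log n$ --- the same pathology you correctly diagnose for (b) --- so (a) must be quoted in a uniform, sharpened form from \cite{Bramson78_BBM} or \cite{Addario-BerryReed09}, or reproved. For (b), your outline (first moment restricted below a logarithmically corrected barrier, ballot estimate to remove the factor $m$, then Markov with a union over integer $y$ to get a polynomial $\phi$ on an event of probability $1-\epsilon$) is the right one, and it is essentially the content of the paper's regimes (i) and (ii). So your proposal is a correct reorganization that makes the ``two independent halves'' heuristic literal, at the cost of leaving the two hardest estimates as standard-but-nontrivial black boxes; the paper's four-regime argument proves the needed facts in passing and is self-contained modulo Lemma \ref{lem_bb}. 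One small bonus of your version: since your lower bound holds with probability $1-O(\epsilon)$ rather than the paper's $1/4$, you can conclude \eqref{dec} directly without invoking the tightness Lemma \ref{lem_tight}.
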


 %This result are interesting, compared with the $2^n$ independent walks with increments %$N(0,\sigma^2)$. The maximal $(M_n^{\mbox{ind}})'$ in this independent model can be %calculated by the first and second moment methods as
%\begin{equation}\label{eqvar_ind}
%(M_n^{\text{ind}})'=\sqrt{2\sigma^2\log 2}n-\frac{\sigma^2}{2\sqrt{2\sigma^2\log %2}}\log n+O(1),\;\;\text{a.s.}.
%\end{equation}
%$M_n'$ and $(M_n^{\text{ind}})'$ possess the same law of large number, but different %$\log n$ corrections. The difference is due to the intrinsic dependence from the %branching structure in branching random walks.

%Different phenomenon occurs when $\sigma_1^2\neq \sigma_2^2$. In this case, if we %consider the corresponding independent walks model, there is no surprise that the %maximal displacement will always be

 For comparison purpose, it is useful to introduce
 the
model of
$2^n$ independent (inhomogeneous) random walks with centered independent Gaussian variables, with variance profile as
above.
%; in this model there is
%complete
%, which assumes complete spatial independence and the number of walks grows exponentially with respect to time $n$.
%Let us
Denote by $M_n^{\text{ind}}$ the maximal displacement at time $n$ in this model. Because of the complete independence, it can be easily shown that
\begin{equation}\label{ind}
M_n^{\text{ind}}=\left(\sqrt{(\sigma_1^2+\sigma_2^2)\log 2}\right)n-\frac{\sqrt{\sigma_1^2+\sigma_2^2}}{4\sqrt{\log 2}}\log n+O_P(1)
\end{equation}
for all choices of $\sigma_1^2$ and $\sigma_2^2$. Thus, in this case,
$\se=\sqrt{(\sigma_1^2+\sigma_2^2)/2}$ and $\beta=1/2$.
Thus,
the difference between $M_n^=$ and $M_n^{\text{ind}}$ when $\sigma_1^2=\sigma_2^2$ lies in the logarithmic correction. As commented (for branching Brownian motion) in \cite{Bramson78_BBM}, the different correction is due to
the intrinsic dependence between particles coming from the branching structure in branching random walks.

%which coincides with $(M_n^{\text{ind}})'$ when $\sigma_1^2=\sigma_2^2$. However, interesting behavior comes up when we consider the maximal displacement for branching random walks for $\sigma_1^2\neq \sigma_2^2$.
Another related
quantity is the sub-maximum obtained by a greedy algorithm, which only considers
the maximum over all decendents of the maximal particle at time $n/2$.
%requires every sub-step to be optimal. That is, we can consider the maximum of all particles at time $n/2$ and the branching random walks starting from that particular particle.
Applying \eqref{eqvar}, we find that the output of such algorithm
%maximum at time $n$ is at least
is
\begin{eqnarray}\label{subMax}
  &&\left(\sqrt{2\log 2}\sigma_1\frac{n}{2}-\frac{3}{2}\frac{\sigma_1}{\sqrt{2\log 2}}\log \frac{n}{2}\right)+\left(\sqrt{2\log 2}\sigma_2\frac{n}{2}-\frac{3}{2}\frac{\sigma_2}{\sqrt{2\log 2}}\log \frac{n}{2}\right)+O_P(1)\nonumber\\
  &&=\frac{\sqrt{2\log 2}(\sigma_1+\sigma_2)}{2}n-\frac{3(\sigma_1+\sigma_2)}{2\sqrt{2\log 2}}\log n+O_P(1).
\end{eqnarray}
Comparing \eqref{subMax} with the theorems, we see that
this algorithm yields the maximum up to an $O_P(1)$ error in the case
of decreasing variances (compare with
 \eqref{dec}) but not in the case of increasing variances (compare with
 \eqref{inc}) or of homogeneous increments (compare with \eqref{eqvar}).
%More importantly, together with some large deviation calculations, it will give us some intuition for finding the real maximum, which will be clear in the following chapters.

A few comparisons are now in order.
 \begin{itemize}
   \item[1.] When the variances are increasing, $M_n^{\uparrow}$ is
	   asymptotically (up to $O_P(1)$ error)
	   the same as $M_n^{\text{ind}}$, which is exactly the same as the maximum of independent homogeneous random walks with effective variance $\frac{\sigma_1^2+\sigma_2^2}{2}$.
   \item[2.] When the variances are decreasing, $M_n^{\downarrow}$ shares the same asymptotic behavior with the sub-maximum \eqref{subMax}. In this case, a greedy strategy yields the approximate maximum.
   \item[3.] With the same set of diffusivity  constants
	   $\{\sigma_1^2,\sigma_2^2\}$ but different order, $M_n^{\uparrow}$ is greater than $M_n^{\downarrow}$.
	   %An intuitive explanation is that there are more steps with large diffusivity when the diffusivity is increasing.
   \item[4.] While the leading order terms in \eqref{eqvar}, \eqref{inc} and \eqref{dec} are continuous in $\sigma_1$ and $\sigma_2$ (they coincide upon setting $\sigma_1=\sigma_2$), the logarithmic corrections exhibit
	   a phase transition phenomenon (they are not the same when we let $\sigma_1=\sigma_2$).
 \end{itemize}
%First, for the terms of order $n$,
%$M_n^{\downarrow}$ is much smaller than $M_n^{\uparrow}$ which is
%about the same as $M_n^{\text{ind}}$. An intuitive explanation for
%this is that there are more big increments in the increasing
%variance case than in the decreasing variance case. Second, the order $n$ terms %coincide when we let $\sigma_1^2=\sigma_2^2$. Third, more interesting
%are the logarithmic correction terms. $M_n^{\uparrow}$ has the same
%logarithmic correction as $M_n^{\text{ind}}$, although $M_n^{\text{ind}}$ will dominate %$M_n^{\uparrow}$ through the $O(1)$ term, while
%$M_n^{\downarrow}$ has a different logarithmic correction. Lastly, $M_n^{\uparrow}$ and %$M_n^{\downarrow}$ have different
%logarithmic corrections compared with that of $(M_n)'$ when we let
%$\sigma_1^2=\sigma_2^2$. The three do not coincide as for the $n^{\mbox{th}}$ terms. %To
%understand why, we need to look closely at the behavior of the path
%realizing the maximal displacement.

We will prove Theorem \ref{th_inc} in Section \ref{sec_inc} and
Theorem \ref{th_dec} in Section \ref{sec_dec}.
Before proving the
theorems, we state a tightness result.
%For a random variable $X$,
%let $\text{Med}(X)$ denote the median of $X$,
%$$ \text{Med}(X)=\sup\{x: P(X\leq x)\leq 1/2\}\,.$$
\begin{lemma}\label{lem_tight}
  The sequences $\{M_n^{\uparrow}-\text{Med}(M_n^{\uparrow})\}_n$ and
  $\{M_n^{\downarrow}-\text{Med}(M_n^{\downarrow})\}_n$ are tight.
\end{lemma}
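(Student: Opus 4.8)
The plan is to deduce tightness of both recentered sequences from the already-established tightness results of Bramson--Zeitouni \cite{BramsonZeitouni09} and Fang \cite{Fang11}, which apply to branching random walks in time inhomogeneous environments under uniform regularity assumptions on the increment laws. The model here --- binary branching, Gaussian increments, two distinct positive variances $\sigma_1^2$ and $\sigma_2^2$ each used on a macroscopic time block --- manifestly satisfies those hypotheses: the increments are bounded in any reasonable sense (Gaussian, hence all exponential moments finite, uniformly in $n$ and in the time step), the branching is deterministic binary (so the Galton--Watson offspring law is degenerate and certainly admits the needed moments), and there is no local dependence. The only mild subtlety is that the variance profile depends on $n$ through the cutoff at $n/2$; I would check that the cited tightness criteria are stated uniformly enough to cover a triangular-array family of environments, which they are, since the relevant constants in \cite{BramsonZeitouni09, Fang11} depend only on uniform bounds ($\essinf$ and $\operatorname*{ess\,sup}$ type controls) on the increment distributions, and here those bounds are simply governed by $\min(\sigma_1^2,\sigma_2^2)$ and $\max(\sigma_1^2,\sigma_2^2)$, independent of $n$.

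Concretely, the steps are as follows. First, I would record the precise statement of the tightness theorem from \cite{BramsonZeitouni09} (or the version in \cite{Fang11}), isolating the list of hypotheses: a uniform lower bound on the variance of each increment, a uniform upper tail bound (e.g.\ uniform sub-Gaussianity or a uniform exponential moment), and the branching law having a uniformly controlled mean and variance with at least two offspring. Second, I would verify each hypothesis for our model, noting that the Gaussian increments with variance in $\{\sigma_1^2,\sigma_2^2\}$ satisfy them with constants depending only on $\sigma_1,\sigma_2$ and not on $n$ or on the time index $k$. Third, I would invoke the theorem to conclude that $\{M_n - \mathrm{Med}(M_n)\}_n$ is tight in each of the two regimes $\sigma_1^2 < \sigma_2^2$ and $\sigma_1^2 > \sigma_2^2$ (the argument does not see the ordering, so it applies verbatim to both $M_n^{\uparrow}$ and $M_n^{\downarrow}$).

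The main obstacle --- really the only place requiring care --- is confirming that the cited results are robust to the $n$-dependence of the environment and to the macroscopic (rather than slowly varying) change of the variance. If the statements in \cite{BramsonZeitouni09, Fang11} are phrased for a fixed environment rather than a triangular array, I would note that their proofs proceed via first- and second-moment estimates on the number of particles above a level together with a truncation argument, all of which are uniform in the environment once the uniform bounds above are in force; alternatively, one can embed our two-phase environment into the general regularity class considered there. An honest write-up would either cite the triangular-array version directly if available, or spend a short paragraph indicating that the constants in the relevant lemmas of \cite{BramsonZeitouni09} depend on the environment only through the stated uniform bounds, hence the tightness is uniform over the family indexed by $n$. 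Once that point is granted, the lemma follows immediately, and we may freely translate between the $O_P(1)$ formulation in \eqref{uni_max} and the median statements used in the proofs of Theorems~\ref{th_inc} and~\ref{th_dec}.
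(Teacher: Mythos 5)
Your proposal is correct and follows essentially the same route as the paper: the authors likewise dispose of Lemma~\ref{lem_tight} by citing the tightness results of \cite{BramsonZeitouni09} and \cite{Fang11}, noting that the recursion for the law of $M_n$ is unchanged apart from indexing and that the argument there uses only one step of the recursion, so the $n$-dependence of the environment (your ``triangular array'' concern) is harmless --- the same point you make via the uniformity of the constants in the increment bounds.
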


This lemma follows from Bramson and Zeitouni \cite{BramsonZeitouni09}
or Fang \cite{Fang11}. One can write down a similar
recursion for the distribution of $M_n$ to the one in those two papers,
except for different subscripts and superscripts. Since the argument there depends only on one step of the recursion, it applies here directly without any change and leads to the tightness result in the lemma.

A note on notation:
throughout, we use $C$ to denote a generic positive constant,
possibly depending on $\sigma_1$ and $\sigma_2$, that may change
from line to line.
\section{Increasing Variances: $\sigma_1^2<\sigma_2^2$} \label{sec_inc}

In this section, we prove Theorem
\ref{th_inc}. We begin in Subsection \ref{sec_fluc}
with a result on the fluctuation of an
inhomogeneous random walk.
In the short
Subsection \ref{sec_ldp} we
provide large-deviations based heuristics for our results.
While it is not used in the actual proof, it
explains
the leading term of the maximal displacement and
gives hints about the
derivation
of the logarithmic correction term.
The actual proof of Theorem \ref{th_inc} is provided in subsection
\ref{sec_inc_proof}.

%In the end, we give a rigorous proof based on the intuition obtained from the large deviation analysis.

\subsection{Fluctuation of an Inhomogeneous Random Walk}\label{sec_fluc}

Let
\begin{equation}\label{inRW}
S_n=\sum_{i=1}^{n/2}X_i+\sum_{i=n/2+1}^{n}Y_i
\end{equation}
be an inhomogeneous random walk, where $X_i\sim N(0,\sigma_1^2)$, $Y_i\sim N(0,\sigma_2^2)$, and $X_i$ and $Y_i$ are independent. Define
\begin{equation}\label{intPos}
 s_{k,n}(x)=\left\{
  \begin{aligned}
  &\frac{\sigma_1^2k}{(\sigma_1^2+\sigma_2^2)\frac{n}{2}}x, &\;\;0\leq k\leq \frac{n}{2},\\
  &\frac{\sigma_1^2\frac{n}{2}+\sigma_2^2(k-\frac{n}{2})} {(\sigma_1^2+\sigma_2^2)\frac{n}{2}} x,&  \;\; \frac{n}{2}\leq k\leq n,
  \end{aligned}
  \right.
\end{equation}
and
\begin{equation}\label{bigFluc}
f_{k,n}=\left\{\begin{aligned} &c_fk^{2/3},\;&\;k\leq n/2,\\ &c_f(n-k)^{2/3},\;& \; n/2<k\leq n.\end{aligned}\right.
\end{equation}
%Consider two events
%$$A:=\left\{S_n=s_n\right\},$$
%$$B:=\left\{S_k\in[s_k-f_k,s_k+f_k],\;\text{for all}\;0\leq k\leq n\right\}.$$
As the following lemma says, conditioned on $\{S_n=x\}$, the path of the walk $S_n$ follows $s_{k,n}(x)$ with fluctuation less than or equal to $f_{k,n}$ at level $k\leq n$.

\begin{lemma}\label{lem_bigfluc}
 There exists a constant $C>0$ (independent of $n$) such that
 $$P(S_n(k)\in [s_{k,n}(S_n)-f_{k,n},s_{k,n}(S_n)+f_{k,n}]\;\text{ for all }\;0\leq k\leq n|S_n) \geq C,$$
 where $S_n(k)$ is the sum of the first $k$ summands of $S_n$, i.e.,
 $$S_n(k)=\left\{\begin{aligned} &\sum_{k=1}^{k}X_k,\;&\;k\leq n/2,\\ &\sum_{k=1}^{n/2}X_k+\sum_{k=n/2+1}^{k}Y_k,\;& \; n/2<k\leq n.\end{aligned}\right.$$
\end{lemma}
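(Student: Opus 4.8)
The plan is to reduce the statement to a Brownian bridge estimate via a standard decomposition. Conditioned on $\{S_n = x\}$, the path $(S_n(k))_{0\le k\le n}$ is a Gaussian process; the function $s_{k,n}(x)$ defined in \eqref{intPos} is precisely the conditional mean $E[S_n(k)\mid S_n=x]$, since it interpolates linearly (in the natural time-changed variable given by accumulated variance) between $0$ at $k=0$ and $x$ at $k=n$, with slope proportional to $\sigma_1^2$ on the first half and to $\sigma_2^2$ on the second half. Hence $S_n(k) - s_{k,n}(S_n)$ is a mean-zero Gaussian process, independent of $S_n$, whose law does not depend on $x$; call it $B_k$. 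Concretely, $B_k$ is (a deterministic time-change of) a standard Brownian bridge on $[0,1]$ of total "length" $(\sigma_1^2+\sigma_2^2)\frac n2$: on $0\le k\le n/2$ it is the bridge built from the $X_i$'s tied down at both ends, and on $n/2\le k\le n$ likewise from the $Y_i$'s, the two pieces being pinned together at $k=n/2$ in a way consistent with the single global bridge. So it suffices to show
$$P\big(|B_k|\le f_{k,n}\ \text{for all } 0\le k\le n\big)\ge C$$
for a constant $C>0$ independent of $n$, with $f_{k,n}=c_f\min(k,n-k)^{2/3}$ (note the barrier grows near the endpoints but only like a small power, and it is smallest, of order $n^{2/3}$, in the middle).

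The key steps, in order: (i) Identify $s_{k,n}(S_n)$ as the conditional expectation and verify that $B_k := S_n(k)-s_{k,n}(S_n)$ is independent of $S_n$ with an explicit law — this is a direct Gaussian computation (covariances), the only subtlety being the matching at $k=n/2$. (ii) Express $B$ as a (rescaled, time-changed) standard Brownian bridge on $[0,1]$; the time change maps level $k$ to $t(k)=s_{k,n}(1)\in[0,1]$ and the amplitude scales like $\sqrt{(\sigma_1^2+\sigma_2^2)n/2}$. Under this map the barrier $f_{k,n}$ near $k$ corresponds, up to constants, to a barrier of order $(\min(k,n-k))^{2/3}/\sqrt n$ in bridge units, i.e.\ near bridge-time $t$ a barrier of order $(\min(t,1-t))^{2/3} n^{2/3}/\sqrt n = (\min(t,1-t))^{2/3} n^{1/6}\to\infty$. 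Thus asymptotically the barrier blows up everywhere, so the probability that the bridge stays below it tends to $1$; for a uniform lower bound we just need it bounded below by a positive constant for \emph{all} $n$, including small $n$. (iii) For the quantitative bound, use the reflection principle / maximal inequality for the Brownian bridge: $P(\sup_{t\in[0,1]}|W^{\mathrm{br}}_t| > \lambda)\le 2e^{-2\lambda^2}$ (or a crude union bound over the two halves plus a Doob inequality for the underlying random walk). Splitting $[0,n]$ into dyadic blocks around the nearest endpoint — the block at distance $\sim 2^j$ from an endpoint, on which $f_{k,n}\ge c_f 2^{2j/3}$ — and applying the reflection bound on each block gives a convergent sum $\sum_j \exp(-c\,2^{4j/3}/\,(\text{local variance scale}))$, which is summable and small; taking $c_f$ large makes the total excess probability $\le 1/2$, uniformly in $n$. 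Handling the pinned midpoint just means doing this separately on $[0,n/2]$ and $[n/2,n]$ and intersecting.

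The main obstacle, I expect, is step (i)/(ii): getting the covariance of the conditioned path right at and across the break point $k=n/2$, and cleanly identifying the resulting process as a single Brownian bridge rather than two independent bridges glued badly — the value $S_n(n/2)$ is itself random given $S_n$, so the two halves are not conditionally independent, and one must track that correlation to see that $s_{k,n}$ is the correct centering on both pieces simultaneously. Once the process is correctly identified as a time-changed standard bridge of total length $\Theta(n)$, the barrier estimate in step (iii) is routine: the $2/3$ exponent is comfortably larger than the $1/2$ needed for the fluctuations of a bridge of length $n$, so there is ample room, and only a union bound plus the reflection principle is required. A remark worth including is that one does not even need the sharp constant in the reflection bound — any exponential tail for $\sup|W^{\mathrm{br}}|$ suffices because the barrier exponent beats $1/2$ with room to spare.
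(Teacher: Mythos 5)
Your proposal is correct and follows essentially the same route as the paper: the paper likewise sets $\tilde S_{k,n}=S_n(k)-s_{k,n}(S_n)$, verifies by a direct covariance computation (including across the break point $k=n/2$) that it is a mean-zero Gaussian process independent of $S_n$ with variance of order $\min(k,n-k)$, and thereby removes the conditioning. The only difference is the final estimate, where the paper uses a plain union bound over the individual times $k$ with a pointwise Gaussian tail bound, $\sum_{k}\frac{c_0}{\sqrt k}e^{-c_1c_f^2k^{1/3}}$ made small by taking $c_f$ large, instead of your dyadic-block maximal inequality; both work because the barrier exponent $2/3$ exceeds the fluctuation exponent $1/2$.
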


\begin{proof}
  Let $\tilde{S}_{k,n}=S_n(k)-s_{k,n}(S_n)$. Then, similar to Brownian bridge, one can check that $\tilde{S}_{k,n}$ are independent of $S_n$. To see this, first note that the covariance between $\tilde{S}_{k,n}$ and $S_n$ is
  $$Cov(\tilde{S}_{k,n},S_n)=E\tilde{S}_{k,n}S_n-E\tilde{S}_{k,n}ES_n =E\tilde{S}_{k,n}S_n,$$
  since $ES_n=0$ and $E\tilde{S}_{k,n}=0$.

  For $k\leq n/2$,
  $$\tilde{S}_{k,n}=\left(1-\frac{\sigma_1^2k}{(\sigma_1^2+\sigma_2^2)\frac{n}{2}}\right) \sum_{i=1}^{k}X_i-\frac{\sigma_1^2k}{(\sigma_1^2+\sigma_2^2)\frac{n}{2}} \sum_{i=k+1}^{n/2}X_i-\frac{\sigma_1^2k}{(\sigma_1^2+\sigma_2^2)\frac{n}{2}} \sum_{i=n/2+1}^{n}Y_i.$$
  Expand $\tilde{S}_{k,n}S_n$, take expectation, and then all terms vanish except for those containing $X_i^2$ and $Y_i^2$. Taking into account that $EX_i^2=\sigma_1^2$ and $EY_i^2=\sigma_2^2$, one has
  \begin{eqnarray}\label{cov}
  &&Cov(\tilde{S}_{k,n},S_n)=E\tilde{S}_{k,n}S_n\nonumber\\  &=&\left(1-\frac{\sigma_1^2k}{(\sigma_1^2+\sigma_2^2)\frac{n}{2}}\right) \sum_{i=1}^{k}EX_i^2 -\frac{\sigma_1^2k}{(\sigma_1^2+\sigma_2^2)\frac{n}{2}} \sum_{i=k+1}^{n/2}EX_i^2 -\frac{\sigma_1^2k}{(\sigma_1^2+\sigma_2^2)\frac{n}{2}} \sum_{i=n/2+1}^{n}EY_i^2 \nonumber\\
  &=&\left(1-\frac{\sigma_1^2k}{(\sigma_1^2+\sigma_2^2)\frac{n}{2}}\right)k\sigma_1^2 -\frac{\sigma_1^2k}{(\sigma_1^2+\sigma_2^2)\frac{n}{2}} (n/2-k)\sigma_1^2 -\frac{\sigma_1^2k}{(\sigma_1^2+\sigma_2^2)\frac{n}{2}} (n/2)\sigma_2^2\nonumber\\
  &=&0.
  \end{eqnarray}

  For $n/2< k\leq n$, one can calculate $Cov(\tilde{S}_{k,n},S_n)=0$ similarly as follows. First,
  $$\tilde{S}_{k,n}=\frac{\sigma_2^2(n-k)}{(\sigma_1^2+\sigma_2^2)\frac{n}{2}} \sum_{i=1}^{n/2}X_i+\frac{\sigma_2^2(n-k)}{(\sigma_1^2+\sigma_2^2)\frac{n}{2}} \sum_{i=n/2+1}^{k}Y_i- \left(1-\frac{\sigma_2^2(n-k)}{(\sigma_1^2+\sigma_2^2)\frac{n}{2}}\right) \sum_{i=k+1}^{n}Y_i.$$
  Then, expanding $\tilde{S}_{k,n}S_n$ and taking expectation, one has
  \begin{eqnarray*}
    &&Cov(\tilde{S}_{k,n},S_n)=E\tilde{S}_{k,n}S_n\nonumber\\
    &=& \frac{\sigma_2^2(n-k)}{(\sigma_1^2+\sigma_2^2)\frac{n}{2}} \sum_{i=1}^{n/2}EX_i^2+\frac{\sigma_2^2(n-k)}{(\sigma_1^2+\sigma_2^2)\frac{n}{2}} \sum_{i=n/2+1}^{k}EY_i^2- \left(1-\frac{\sigma_2^2(n-k)}{(\sigma_1^2+\sigma_2^2)\frac{n}{2}}\right) \sum_{i=k+1}^{n}EY_i^2\\
    &=& \frac{\sigma_2^2(n-k)}{(\sigma_1^2+\sigma_2^2)\frac{n}{2}} (n/2)\sigma_1^2+\frac{\sigma_2^2(n-k)}{(\sigma_1^2+\sigma_2^2)\frac{n}{2}} (k-n/2)\sigma_2^2- \left(1-\frac{\sigma_2^2(n-k)}{(\sigma_1^2+\sigma_2^2)\frac{n}{2}}\right) (n-k)\sigma_2^2\\
    &=&0
  \end{eqnarray*}

  Therefore, $\tilde{S}_{k,n}$ are independent of $S_n$ since they are Gaussian. Using this independence,
%  $A=\{S_n=s_n\}$ and $B=\{S_n=s_n,\tilde{S}_k\in[-f_k,f_k],\;\text{for all}\;0\leq %k\leq n\}$. So
%  $$P(B)=P(A)P(\tilde{S}_k\in[-f_k,f_k],\;\text{for all}\;0\leq k\leq n).$$
  \begin{eqnarray*}
  &&P(S_n(k)\in [s_{k,n}(S_n)-f_{k,n},s_{k,n}(S_n)+f_{k,n}]\;\text{ for all }\;0\leq k\leq n|S_n)\\
  &=&P(\tilde{S}_{k,n}\in [-f_{k,n},f_{k,n}]\;\text{ for all }\;0\leq k\leq n|S_n)\\
  &=&P(\tilde{S}_{k,n}\in [-f_{k,n},f_{k,n}]\;\text{ for all }\;0\leq k\leq n).
  \end{eqnarray*}
  By calculation similar to \eqref{cov}, $\tilde{S}_{k,n}$ is a Gaussian sequence with mean zero and variance $k\sigma_1^2\frac{\left((\sigma_1^2+\sigma_2^2)n-2\sigma_1^2k\right)} {(\sigma_1^2+\sigma_2^2)n}$ for $k\leq n/2$ and $(n-k)\sigma_2^2 \frac{\left((\sigma_1^2+\sigma_2)n-2\sigma_2^2(n-k)\right)}{(\sigma_1^2+\sigma_2^2)n}$ for $n/2<k\leq n$. The above quantity is
  $$ 1-P(|\tilde{S}_{k,n}|>f_{k,n},\;\text{for some}\;0\leq k\leq n)\geq 1-\sum_{k=1}^{n}P(|\tilde{S}_{k,n}|>f_{k,n}).$$
  Using a standard Gaussian estimate, e.g. \cite[Theorem 1.4]{Durrett05}, the above quantity is at least,
    $$1-\sum_{k=1}^{n} \frac{c_0}{\sqrt{k}}e^{-\frac{f_{k,n}^2}{k}c_1} \geq 1-2\sum_{k=1}^{\infty}\frac{c_0}{\sqrt{k}}e^{-c_f^2c_1k^{1/3}}:=C>0$$
  where $c_0,c_1$ are constants depending on $\sigma_1$ and $\sigma_2$, and $C>0$ can be realized by choosing the constant $c_f$ large. This proves the lemma.
\end{proof}

\subsection{Sample Path Large Deviation Heuristics}\label{sec_ldp}
We explain (without giving a proof) what we expect for the order $n$ term of $M_n{\uparrow}$, by giving a large
deviation argument. The exact proof will be postponed to the next subsection. Consider the same $S_n$ as defined in \eqref{inRW} and a function $\phi(t)$ defined on $[0,1]$ with $\phi(0)=0$. A sample path large deviation result, see \cite[Theorem 5.1.2]{DemboZeitouni98_LDP}, tells us that the probability for $S_{\lfloor rn\rfloor}$ to be roughly $\phi(r)n$ for $0\leq r\leq s\leq 1$ is roughly $e^{-nI_s(\phi)}$, where
\begin{equation}\label{rateFun}
  I_s(\phi)=\int_0^s\Lambda^*_r(\dot{\phi}(r))dr,
\end{equation}
$\dot{\phi}(r)=\frac{d}{dr}\phi(r)$, and $\Lambda^*_r(x)=\frac{x^2}{2\sigma_1^2}$, for $0\leq r\leq 1/2$, and $\frac{x^2}{2\sigma_1^2}$, for $1/2<r\leq 1$. A first moment argument would yield a necessary condition for a walk that roughly follows the path $\phi(r)n$ to exist among the branching random walks,
\begin{equation}\label{LDPshort}
  I_s(\phi)\leq s\log 2,\;\;\mbox{ for all }\; 0\leq s\leq 1.
\end{equation}
This is equivalent to
\begin{equation}\label{LDP}
  \left\{
  \begin{aligned}
  &\int_0^s \frac{\dot{\phi}^2(r)}{2\sigma_1^2}dr\leq s\log 2, &\;\;0\leq s\leq \frac{1}{2},\\
  &\int_0^{\frac{1}{2}}\frac{\dot{\phi}^2(r)}{2\sigma_1^2}dr+\int_{\frac{1}{2}}^s \frac{\dot{\phi}^2(r)}{2\sigma_2^2}dr\leq s\log 2,&  \;\; \frac{1}{2}\leq s\leq 1.
  \end{aligned}
  \right.
\end{equation}
Otherwise, if \eqref{LDPshort} is violated for some $s_0$, i.e., $I_{s_0}(\phi)> s_0\log 2$, there will be no path following $\phi(r)n$ to $\phi(s_0)n$, since the expected number of such paths is $2^{sn}e^{-nI_s(\phi)}=e^{-(I_s(\phi)-s\log 2)n}$, which decreases exponentially.

Our goal is then to maximize $\phi(1)$ under the constraints \eqref{LDP}. By Jensen's inequality and convexity, one can prove that it is equivalent to maximizing $\phi(1)$ subject to
\begin{equation}\label{optimization}
    \frac{\phi^2(1/2)}{\sigma_1^2}\leq \frac{1}{2} \log 2,\; \frac{\phi^2(1/2)}{\sigma_1^2}+\frac{(\phi(1)-\phi(1/2))^2}{\sigma_2^2}\leq \log 2.
\end{equation}
Note that the above argument does not necessarily require $\sigma_1^2<\sigma_2^2$.

Under the assumption that $\sigma_1^2<\sigma_2^2$, we can solve the optimization problem with the optimal curve
\begin{equation}\label{inc_LDP_curve}
  \phi(s)=\left\{
  \begin{aligned}
  &\frac{2\sigma_1^2\sqrt{\log 2}}{\sqrt{(\sigma_1^2+\sigma_2^2)}}s, &\;\;0\leq s\leq \frac{1}{2},\\
  &\frac{2\sigma_1^2\sqrt{\log 2}}{\sqrt{(\sigma_1^2+\sigma_2^2)}}\frac{1}{2}+\frac{2\sigma_2^2\sqrt{\log 2}}{\sqrt{(\sigma_1^2+\sigma_2^2)}}(s-\frac{1}{2}),&  \;\; \frac{1}{2}\leq s\leq 1.
  \end{aligned}
  \right.
\end{equation}

If we plot this optimal curve and the suboptimal curve leading to \eqref{subMax} as in Figure \ref{incFig}, it is easy to see that the ancestor at time $n/2$ of the actual maximum at time $n$ is not a maximum at time $n/2$, since $\frac{2\sigma_1^2\sqrt{\log 2}}{\sqrt{(\sigma_1^2+\sigma_2^2)}}<\sqrt{2\sigma_1^2\log 2}$. A further rigorous calculation as in the next subsection shows that, along the optimal curve \eqref{inc_LDP_curve}, the branching random walks have an exponential decay of correlation. Thus a fluctuation between $n^{1/2}$ and $n$ that is larger than the typical fluctuation of a random walk is admissible. This is consistent with the naive observation from Figure \ref{incFig}. This kind of behavior also occurs in the independent random walks model, explaining why $M_n^{\uparrow}$ and $M_n^{\mbox{ind}}$ have the same asymptotical expansion up to an $O(1)$ error, see \eqref{inc} and \eqref{ind}.
%The maximal displacement at time $\frac{n}{2}$ is $\sqrt{2\sigma_1^2\log 2}\frac{n}{2}+o(n)$, and the maximal displacement at time $n$ of the branching random walk starting from the maxima at time $\frac{n}{2}$ is $\sqrt{2\sigma_1^2\log 2}\frac{n}{2}+\sqrt{2\sigma_2^2\log 2}\frac{n}{2}+O(n)$. If we plot this curve with $\phi(t)$ as in Figure \ref{fig_inc}, the graph indicates that there are exponentially many particles along the $\phi(t)$. This gives an idea of why $M_n^{\uparrow}$ and $M_n^{\text{ind}}$ have the same asymptotic behavior. To actually prove Theorem \ref{th_inc}. From the graph, it seems that the paths leading to the maximal may have a fluctuation between $n^{1/2}$ and $n$. For example, let us consider fluctuation $n^{2/3}$ and introduce the following lemma.

\begin{figure}[h]
  \centering
  \includegraphics[scale=0.8]{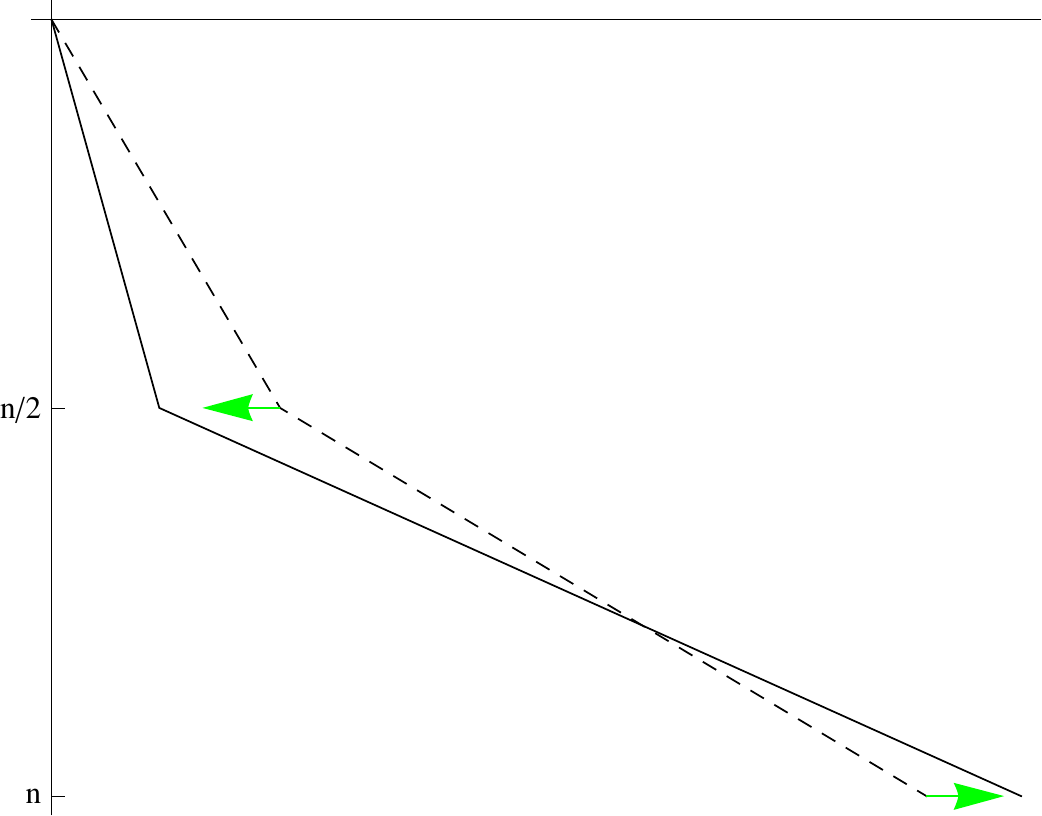}
  \caption{Dashed: maximum at time $n$ of BRW starting from maximum at time $n/2$.\newline \quad Solid: maximum at time $n$ of BRW starting from time $0$.}\label{incFig}
\end{figure}

\subsection{Proof of Theorem \ref{th_inc}}\label{sec_inc_proof}
With Lemma \ref{lem_bigfluc} and the observation from Section \ref{sec_ldp}, we can now provide a proof of Theorem \ref{th_inc}, applying the first and second moments method to the appropriate sets. In the proof, we use $S_n$ to denote the walk defined by \eqref{inRW} and $S_k$ to denote the sum of the first $k$ summand in $S_n$.
\begin{proof}[Proof of Theorem \ref{th_inc}]
\quad {\it Upper bound.} Let $a_n=\sqrt{(\sigma_1^2+\sigma_2^2)\log 2}n-\frac{\sqrt{\sigma_1^2+\sigma_2^2}}{4\sqrt{\log 2}}\log n$. Let $N_{1,n}=\sum_{v\in\mathds{D}_n}1_{\{S_v>a_n+y\}}$ be the number of particles at time n whose displacements are greater than $a_n+y$. Then
$$EN_{1,n}=2^nP(S_n\geq a_n+y)\leq c_2e^{-c_3y}$$
where $c_2$ and $c_3$ are constants independent of $n$ and the last inequality is due to the fact that $S_n\sim N(0,\frac{\sigma_1^2+\sigma_2^2}{2}n)$. So we have, by the Chebyshev's inequality,
\begin{equation}\label{incUB}
P(M_n^{\uparrow}>a_n+y)=P(N_1\geq 1)\leq EN_{1,n}\leq c_2e^{-c_3y}.
\end{equation}
Therefore, this probability can be made as small as we wish by choosing a large $y$.

{\it Lower bound.} Consider the walks which are at $s_n\in I_n=[a_n,a_n+1]$ at time n and follow $s_{k,n}(s_n)$, defined by \eqref{intPos}, at intermediate times with fluctuation bounded by $f_{k,n}$, defined by \eqref{bigFluc}. Let $I_{k,n}(x)=[s_{k,n}(x)-f_{k,n},s_{k,n}(x)+f_{k,n}]$ be the `admissible' interval at time $k$ given $S_n=x$, and let
$$N_{2,n}=\sum_{v\in\mathds{D}_n}1_{\{S_v\in I_n,S_{v^k}\in I_{k,n}(S_v) \mbox{ for all } 0\leq k\leq n\}}$$
be the number of such walks. By Lemma \ref{lem_bigfluc},
\begin{eqnarray}\label{incLBfirst}
EN_{2,n}&=&2^nP(S_n\in I_n,S_n(k)\in I_{k,n}(S_n) \mbox{ for all } 0\leq k\leq n ) \nonumber\\
&=&2^nE(1_{\{S_n\in I_n\}}P(S_n(k)\in I_{k,n}(S_n) \mbox{ for all } 0\leq k\leq n |S_n))\nonumber \\
&\geq &2^nCP(S_n\in I_n)\geq c_4.
\end{eqnarray}

Next, we bound the second moment $EN_{2,n}^2$. By considering the location of any pair $v_1,v_2\in\mathds{D}_n$ of particles at time $n$ and at their common ancestor $v_1\wedge v_2$, we have
\begin{eqnarray*}
  &&EN_{2,n}^2=E\sum_{v_1,v_2\in\mathds{D}_n}1_{\{S_{v_i}\in I_n,\;S_{(v_i)^j}\in I_{j,n}(S_{(v_i)^j}) \mbox{ for all } 0\leq j\leq n,i=1,2\}}\\
  &=& \sum_{k=0}^n\sum_{\substack{v_1,v_2\in\mathds{D}_n\\ v_1\wedge v_2\in \mathds{D}_k}}E1_{\{S_{v_i}\in I_n,\;S_{(v_i)^j}\in I_{j,n}(S_{(v_i)^j}) \mbox{ for all } 0\leq j\leq n,i=1,2\}}\\
  &\leq & \sum_{k=0}^n\sum_{\substack{v_1,v_2\in\mathds{D}_n\\ v_1\wedge v_2\in \mathds{D}_k}}P(S_{v_1}\in I_n,\;S_{(v_1)^j}\in I_{j,n}(S_{(v_1)^j}) \mbox{ for all } 0\leq j\leq n)\\
  &&\;\;\;\;\;\;\;\;\;\;\;\;\;\;\;\;\;\cdot P(S_{v_2}-S_{v_1\wedge v_2}\in [x-s_{k,n}(x)-f_{k,n},x-s_{k,n}(x)+f_{k,n}],x\in I_n),
\end{eqnarray*}
where we use the independence between $S_{v_2}-S_{v_1\wedge v_2}$ and $S_{(v_1)^j}$ in the last inequality. And the last expression (double sum) in the above display is
\begin{eqnarray*}
  && \sum_{k=0}^n2^{2n-k}P(S_n\in I_n,S_n(j)\in I_{j,n}(S_n) \mbox{ for all } 0\leq j\leq n )\\
  &&\;\;\;\;\;\;\;\cdot P(S_n-S_n(k) \in[x-s_{k,n}(x)-f_{k,n},x-s_{k,n}(x)+f_{k,n}],x\in I_n)\\
  &\leq& EN_{2,n}\sum_{k=0}^n2^{n-k}P(S_n-S_n(k) \in[x-s_{k,n}(x)-f_{k,n},x-s_{k,n}(x)+f_{k,n}],x\in I_n).
\end{eqnarray*}
The above probabilities can be estimated separately when $k\leq n/2$ and $n/2<k\leq n$. For $k\leq n/2$, $S_n-S_n(k)\sim N(0,\frac{n}{2}(\sigma_1^2+\sigma_2^2)-k\sigma_1^2)$. Thus,
\begin{eqnarray*}
 && P(S_n-S_n(k) \in[x-s_{k,n}(x)-f_{k,n},x-s_{k,n}(x)+f_{k,n}],x\in I_n)\\
 &\leq & 2f_{k,n}\frac{1}{\sqrt{\pi ((\sigma_1^2+\sigma_2^2)n-2k\sigma_1^2)}} \exp\left(-\frac{\left((1-\frac{2\sigma_1^2k}{(\sigma_1^2+\sigma_2^2)n})a_n-f_{k,n} \right)^2} {(\sigma_1^2+\sigma_2^2)n-2k\sigma_1^2}\right)\\
 &\leq & 2^{-n+\frac{2\sigma_1^2}{\sigma_1^2+\sigma_2^2}k+o(k)}.
\end{eqnarray*}
For $n/2<k\leq n$, $S_n-S_n(k)\sim N(0,(n-k)\sigma_2^2)$. Thus,
\begin{eqnarray*}
 &&  P(S_n-S_n(k) \in[x-s_{k,n}(x)-f_{k,n},x-s_{k,n}(x)+f_{k,n}],x\in I_n)\\
 &\leq & 2f_{k,n}\frac{1}{\sqrt{2\pi(n-k)\sigma_2^2}} \exp\left(-\frac{\left(\frac{2\sigma_2^2(n-k)} {(\sigma_1^2+\sigma_2^2)n}a_n-f_{k,n}\right)^2} {2(n-k)\sigma_2^2}\right)\\
 &\leq & 2^{-\frac{2\sigma_2^2}{\sigma_1^2+\sigma_2^2}(n-k)+o(n-k)}.
\end{eqnarray*}
Therefore,
\begin{equation}\label{incLBsec}
EN_{2,n}^2\leq EN_{2,n}\left(\sum_{k=0}^{n/2}2^{\frac{\sigma_1^2-\sigma_2^2}{\sigma_1^2+\sigma_2^2} k+o(k)}+\sum_{k=n/2+1}^n2^{\frac{\sigma_1^2-\sigma_2^2}{\sigma_1^2+\sigma_2^2}(n-k) +o(n-k)}\right)\leq c_5EN_{2,n},
\end{equation}
where $c_5=2\sum_{k=0}^{\infty}2^{\frac{\sigma_1^2-\sigma_2^2}{\sigma_1^2+\sigma_2^2} k+o(k)}$.
By the Cauchy-Schwartz inequality,
\begin{equation}\label{incLB}
P(M_n^{\uparrow}\geq a_n)\geq P(N_{2,n}>0)\geq \frac{(EN_{2,n})^2}{EN_{2,n}^2}\geq c_4/c_5>0.
\end{equation}

The upper bound \eqref{incUB} and lower bound \eqref{incLB} imply that there exists a large enough constant $y_0$ such that
$$P(M_n^{\uparrow}\in [a_n,a_n+y_0])\geq \frac{c_4}{2c_5}>0.$$
Lemma \ref{lem_tight} tells us that the sequence $\{M_n^{\uparrow}-\text{Med}(M_n^{\uparrow})\}_n$ is tight, so $M_n^{\uparrow}=a_n+O(1)$ a.s.. That completes the proof.
\end{proof}

\section{Decreasing Variances: $\sigma_1^2>\sigma_2^2$}\label{sec_dec}

We will again separate the proof of  Theorem \ref{th_dec} into two parts, the lower bound and the upper bound. Fortunately, we can apply \eqref{eqvar} directly to get a lower bound so that we can avoid repeating the second moment argument. However, we do need to reproduce (the first moment argument) part of the proof of \eqref{eqvar} in order to get an upper bound.

\subsection{An Estimate for Brownian Bridge}
Toward this end, we need the following analog of Bramson \cite[Proposition 1']{Bramson78_BBM}. The original proof in Bramson's used the Gaussian density and reflection principle of continuous time Brownian motion, which also hold for the discrete time version. The proof extends without much effort to yield the following estimate for the Brownian bridge $B_k-\frac{k}{n}B_n$, where $B_n$ is a random walk with standard normal increments.

\begin{lemma}\label{lem_bb}
 Let
   $$L(k)=\left\{\begin{array}{ll}
               0 & \mbox{ if } s=0,n, \\
               100\log k & \mbox{ if } k=1,\dots,n/2,\\
               100\log (n-k) & \mbox{ if } k=n/2,\dots,n-1.
   \end{array}\right.$$
  Then, there exists a constant $C$ such that, for all $y>0$,
  $$P(B_n-\frac{k}{n}B_n\leq L(k)+y \mbox{ for } 0\leq k \leq n)\leq \frac{C(1+y)^2}{n}.$$
\end{lemma}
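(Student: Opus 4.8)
The plan is to reduce the statement to a classical fluctuation estimate for the standard random walk bridge, following the structure of Bramson's Proposition 1'. First I would note that the event in question concerns the bridge process $W_k := B_k - \tfrac{k}{n}B_n$, which (conditionally on $B_n$, or unconditionally after integrating out $B_n$) is a mean-zero Gaussian process with $W_0 = W_n = 0$, exactly the discrete analog of the Brownian bridge; by the computation in the proof of Lemma \ref{lem_bigfluc} (the Brownian-bridge-type decomposition), $W_k$ is independent of $B_n$ and has variance $\tfrac{k(n-k)}{n}$, with the same covariance structure as a time-changed Brownian bridge. So it suffices to bound $P(W_k \le L(k) + y \text{ for all } 0 \le k \le n)$.

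The key step is the barrier/ballot-type estimate: for a Brownian bridge $\{w_t\}_{0\le t\le 1}$ pinned at $0$ at both ends, one has $P(w_t \le \delta \text{ for all } t) \asymp \delta^2$ as $\delta\to 0$ (this is the Brownian-bridge analog of the $\asymp \delta$ behaviour for the endpoint-pinned-at-zero-only case, and follows from the reflection principle: $P(\sup_t w_t \le \delta) = 1 - e^{-2\delta^2}$). The extra ingredient here is that the barrier is not flat but the slowly growing curve $L(k)$, which is $O(\log k)$ near the left end, $O(\log(n-k))$ near the right end, and $0$ at the two endpoints. I would handle this by a standard "peeling near the endpoints" argument: on the bulk $k \in [n/4, 3n/4]$ the barrier $L(k) + y$ is at most $C\log n + y$, and away from the endpoints the bridge has variance of order $n$, so the constraint there costs at most a constant factor times $(C\log n + y)^2/n$ — but one has to be more careful to get $(1+y)^2/n$ without a $\log n$; the correct route is Bramson's: shift the whole bridge down by noting that requiring $W_k \le L(k) + y$ is implied by requiring $W_k \le y$ (since $L \ge 0$), which already gives $P \le C(1+y)^2/n$ by the reflection-principle bound, and the role of the logarithmic slack $L(k)$ only matters in the companion lower-bound/localization statements, not here. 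Actually, since $L(k)\ge 0$, the event $\{W_k\le L(k)+y\ \forall k\}\supseteq\{W_k\le y\ \forall k\}$ — wrong direction. So instead I must genuinely use that the barrier is only logarithmically large: split $[0,n]$ into the endpoint windows $[0,n^{1/2}]\cup[n-n^{1/2},n]$ and the bulk. In the bulk, $W_k\le C\log n+y$; here $W_{n/2}$ has standard deviation $\asymp\sqrt n$, and conditioning on $W$ staying below a level $\ell$ throughout $[0,n]$ forces, via the reflection principle applied to the bridge, probability $\le C\ell^2/n$ with $\ell = C\log n + y$ — giving $C(\log n + y)^2/n$. To remove the spurious $\log n$ one observes (Bramson's actual trick) that one does \emph{not} need the barrier constraint in the bulk to be tight; it is the constraint near the endpoints combined with the entropic repulsion that produces the $1/n$, and the $\log$ terms near the ends contribute only a bounded multiplicative factor because $\sum_k (\log k)^2 \cdot (\text{small-ball density at level } \log k)$ converges.

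The main obstacle I anticipate is precisely this bookkeeping near the two endpoints: verifying that the curved barrier $L(k)$ — rather than a flat one — still yields the clean bound $C(1+y)^2/n$ with no extra logarithmic factor, uniformly in $n$. Concretely, one decomposes according to the value of $W_k$ at, say, $k = \lfloor n/2\rfloor$, uses the Markov property to split the bridge into two independent bridge-like halves on $[0,n/2]$ and $[n/2,n]$, and on each half applies a discrete reflection-principle estimate for a random walk constrained below $L(\cdot)+y$ with pinned endpoints; summing over the endpoint value and over $k$ near the ends, the $(\log)^2$ weights are summable against the Gaussian tails, so the product of the two half-probabilities is $\le C(1+y)^2/n$. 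I would invoke the discrete-time reflection principle and Gaussian density bounds (as in \cite[Theorem 1.4]{Durrett05}, already used above) for the elementary estimates, and cite Bramson \cite[Proposition 1']{Bramson78_BBM} for the fact that his continuous-time argument transfers verbatim to the discrete setting, only checking that the $L(k)$ here (logarithmic in the distance to the endpoints) satisfies the same hypotheses as Bramson's barrier.
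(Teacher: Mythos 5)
Your proposal takes essentially the same route as the paper, which gives no self-contained proof of this lemma but simply observes that Bramson's Proposition 1' --- a reflection-principle estimate for a Brownian bridge kept below a barrier that vanishes at the endpoints and grows logarithmically in the distance to them --- carries over to the discrete-time bridge $B_k-\frac{k}{n}B_n$; your midpoint decomposition into two independent half-bridges with ballot-type estimates is precisely the internal structure of that argument. The one step you assert rather than prove is the crucial quantitative point that the logarithmic allowance $L(k)$ costs only a constant factor over the flat barrier $y$ (your first attempt to dispose of it went in the wrong inclusion direction, as you yourself caught), but since the paper delegates exactly this point to \cite[Proposition 1']{Bramson78_BBM}, your citation of the same result puts your write-up at the paper's own level of detail.
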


The coefficient $100$ before $\log$ is chosen large enough to be suitable for later use, and is not crucial in Lemma \ref{lem_bb}.

\subsection{Proof of Theorem \ref{th_dec}}
Before proving the theorem, we discuss the equivalent optimization problems \eqref{LDP} and \eqref{optimization} under our current setting $\sigma_1^2>\sigma_2^2$. It can be solved by employing the optimal curve
\begin{equation}\label{dec_LDP_curve}
  \phi(s)=\left\{
  \begin{aligned}
  &\sqrt{2\log 2}\sigma_1s, &\;\;0\leq s\leq \frac{1}{2},\\
  &\sqrt{2\log 2}\sigma_1\frac{1}{2}+\sqrt{2\log 2}\sigma_2(s-\frac{1}{2}),&  \;\; \frac{1}{2}\leq s\leq 1.
  \end{aligned}
  \right.
\end{equation}

If we plot the curve $\phi(s)$ and the suboptimal curve leading to \eqref{subMax} as in Figure \ref{decFig}, these two curves coincide with each other up to order $n$. Figure \ref{decFig} seems to indicate that the maximum at time $n$ for the branching random walk starting from time $0$ comes from the maximum at time $n/2$. As will be shown rigorously, if a particle at time $n/2$ is left significantly behind the maximum, its descendents will not be able to catch up by time $n$. The difference between Figure \ref{incFig} and Figure \ref{decFig} explains the difference in the logarithmic correction between $M_n^{\uparrow}$ and $M_n^{\downarrow}$.

\begin{figure}[h]
  \centering
  \includegraphics[scale=0.8]{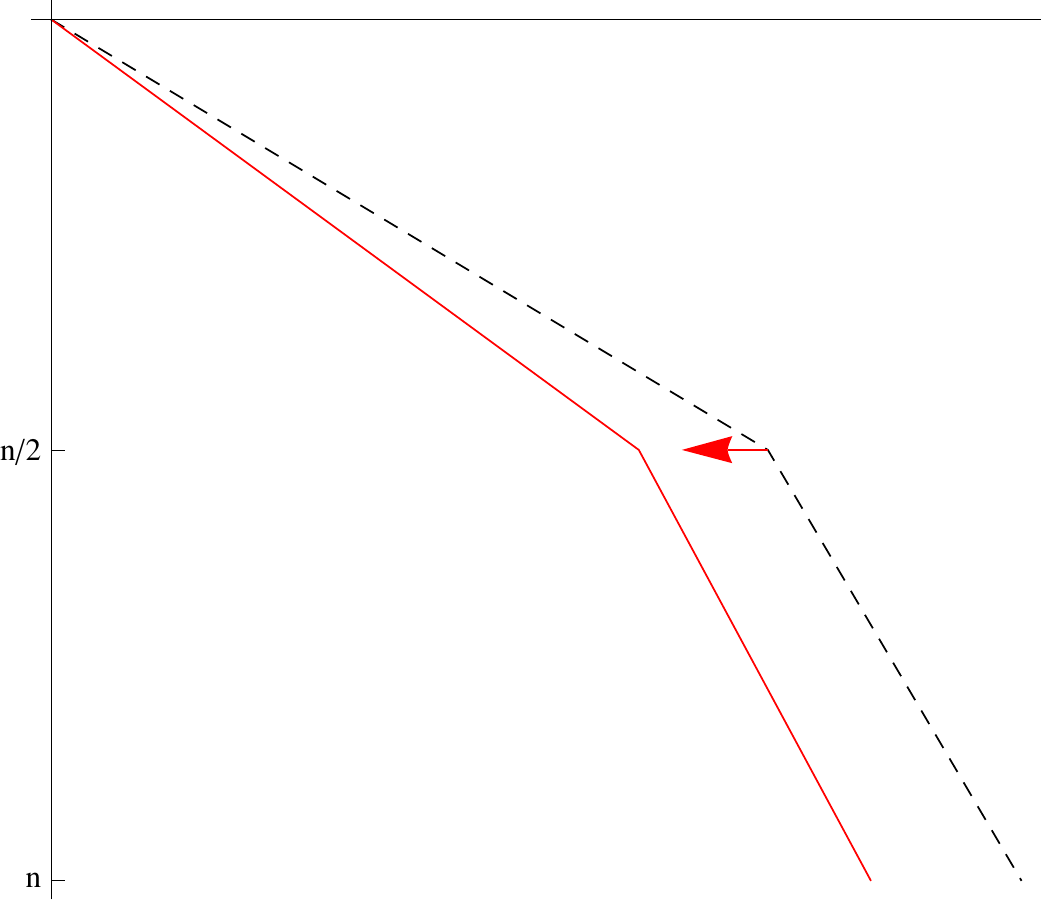}
  \caption{Dash: both the optimal path to the maximum at time $n$ and the path leading to the maximum of BRW starting from the maximum at time $n/2$. Solid: the path to the maximal (rightmost) descendent of a particle at time $n/2$ that is significantly behind the maximum then.}\label{decFig}
\end{figure}

%\textbf{Claim}\quad $M_n=(\sqrt{2\log 2}+\sqrt{\log %2})\frac{n}{2}-\frac{3}{2}(\frac{1}{\sqrt{2\log 2}}+\frac{1}{2\sqrt{\log 2}})\log %n+O(1)$.

\begin{proof}[Proof of Theorem \ref{th_dec}]
  {\it Lower Bound.} For each $i=1,2$,  the formula \eqref{eqvar} implies that there exist $y_i$ (possibly negative) such that, for branching random walk at time $n/2$ with variance $\sigma_i^2$,
  $$P\left(M_{n/2}>\sqrt{2\log 2}\sigma_i\frac{n}{2}-\frac{3\sigma_i}{2\sqrt{2\log 2}}\log \frac{n}{2}+y_i\right)\geq \frac{1}{2}.$$
  By considering a branching random walk starting from a particle at time $n/2$, whose location is greater than $\sqrt{2\log 2}\sigma_1\frac{n}{2}-\frac{3\sigma_1}{2\sqrt{2\log 2}}\log \frac{n}{2}+y_1$, and applying the above display with $i=1$ and $2$,we know that
  \begin{equation}\label{decLB}
  P\left(M_n^{\downarrow}>\frac{\sqrt{2\log 2}(\sigma_1+\sigma_2)}{2}n-\frac{3(\sigma_1+\sigma_2)}{2\sqrt{2\log 2}}\log \frac{n}{2}+y_1+y_2\right)\geq \frac{1}{4}.
 \end{equation}

  {\it Upper Bound.}  We will use a first moment argument to prove that there exists a constant $y$ (large enough) such that
  \begin{equation}\label{decUB}
  P\left(M_n^{\downarrow}>\frac{\sqrt{2\log 2}(\sigma_1+\sigma_2)}{2}n-\frac{3(\sigma_1+\sigma_2)}{2\sqrt{2\log 2}}\log \frac{n}{2}+y\right)<\frac{1}{10}.
  \end{equation}
Similarly to the last argument in the proof of Theorem \ref{th_inc}, the upper bound \eqref{decUB} and the lower bound \eqref{decLB}, together with the tightness result from Lemma \ref{lem_tight}, prove Theorem \ref{th_dec}. So it remains to show \eqref{decUB}.

Toward this end, we define a polygonal line (piecewise linear curve) leading to $\frac{\sqrt{2\log 2}(\sigma_1+\sigma_2)}{2}n-\frac{3(\sigma_1+\sigma_2)}{2\sqrt{2\log 2}}\log \frac{n}{2}$ as follows: for $1\leq k\leq n/2$,
  $$M(k)= \frac{k}{n/2}(\sqrt{2\log 2}\sigma_1\frac{n}{2}
                         -\frac{3\sigma_1}{2\sqrt{2\log 2}}\log \frac{n}{2});$$
and for $n/2+1\leq k\leq n$,
$$M(k)=M(n/2)+ \frac{k-n/2}{n/2}(\sqrt{2\log 2}\sigma_2\frac{n}{2}
                         -\frac{3\sigma_2}{2\sqrt{2\log 2}}\log \frac{n}{2}).$$
Note that $\frac{k}{n}\log n\leq \log k$ for $k\leq n$. Also define
  $$f(k)=\left\{\begin{array}{ll}
                          y & k=0,\frac{n}{2},n,\\
                          y+\frac{5\sigma_1}{2\sqrt{2\log 2}}\log k & 1\leq k\leq n/4, \\
                          y+\frac{5\sigma_1}{2\sqrt{2\log 2}}\log (\frac{n}{2}-k) & \frac{n}{4}\leq k\leq \frac{n}{2}-1, \\
                          y+\frac{5\sigma_2}{2\sqrt{2\log 2}}\log (k-\frac{n}{2}) & \frac{n}{2}+1\leq k\leq \frac{3n}{4},\\
                          y+\frac{5\sigma_2}{2\sqrt{2\log 2}}\log (n-k) & \frac{3n}{4}\leq k\leq n-1.
                        \end{array}\right.$$
We will use $f(k)$ to denote the allowed offset (deviation) from $M(k)$ in the following argument.

The probability on the left side of \eqref{decUB} is equal to
  $$P(\exists v\in\mathds{D}_n \mbox{ such that } S_v>M(n)+y).$$
For each $v\in\mathds{D}_n$, we define $\tau_v=\inf\{k:S_{v^k}>M(k)+f(k)\}$; then \eqref{decUB} is implied by
  \begin{equation}\label{decUBsum}
  \sum_{k=1}^nP(\exists v\in\mathds{D}_n \mbox{ such that } S_v>M(n)+y,\tau_v=k)<1/10.
  \end{equation}
We will split the sum into four regimes: $[1,n/4]$, $[n/4,n/2]$, $[n/2,3n/4]$ and $[3n/4,n]$, corresponding to the four parts of the definition of $f(k)$. The sum over each regime, corresponding to the events in the four pictures in Figure \ref{figFour}, can be made small. The first two are the discrete analog of the upper bound argument in Bramson \cite{Bramson78_BBM}. We will present a complete proof for the first two cases, since the argument is not too long and the argument (not only the result) is used in the latter two cases.
  \begin{figure}[h]
   \centering
    \subfigure[1 to n/4]
    {\includegraphics[scale=0.2]{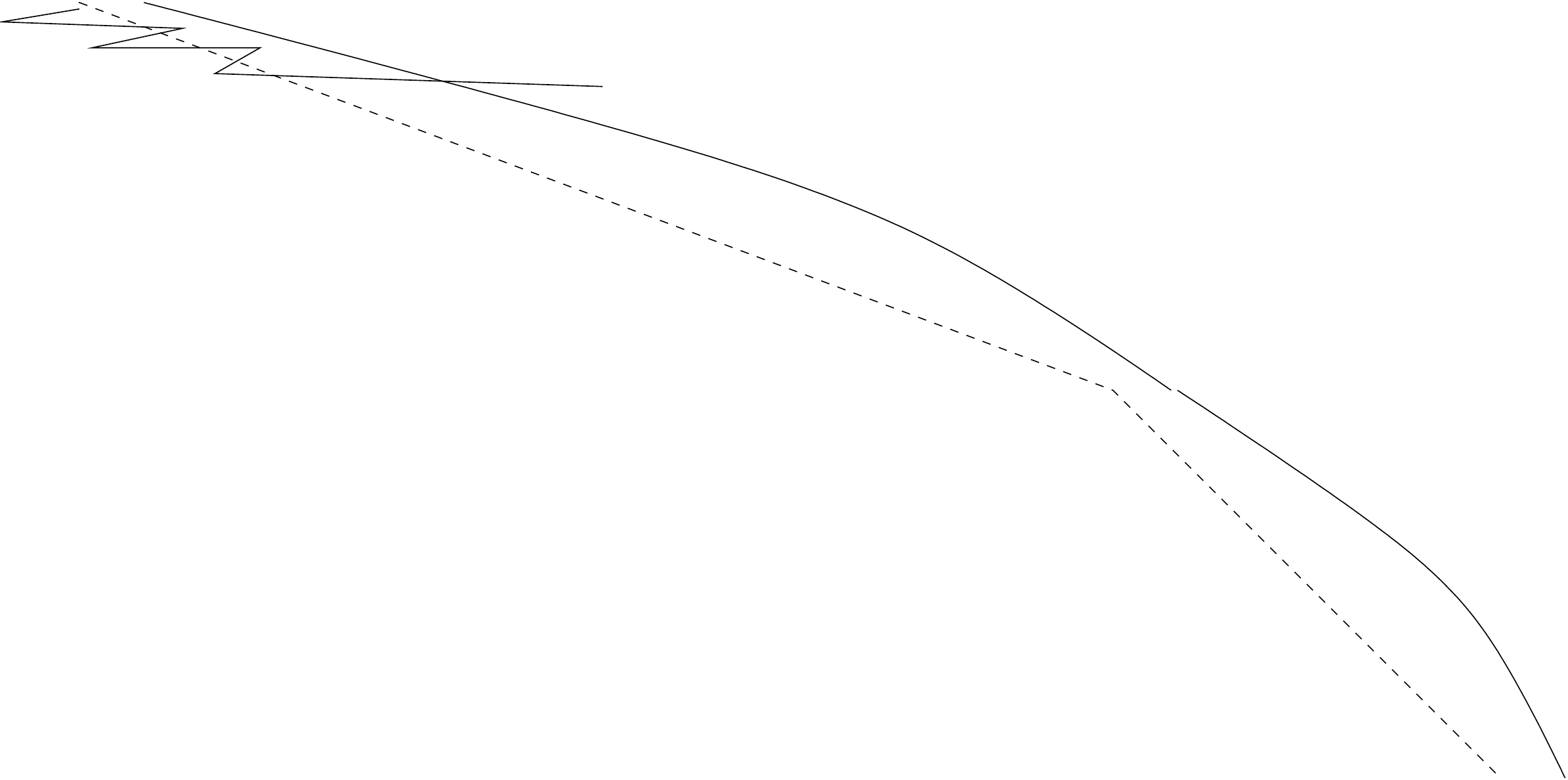}   }
    \subfigure[n/4 to n/2]
    {\includegraphics[scale=0.2]{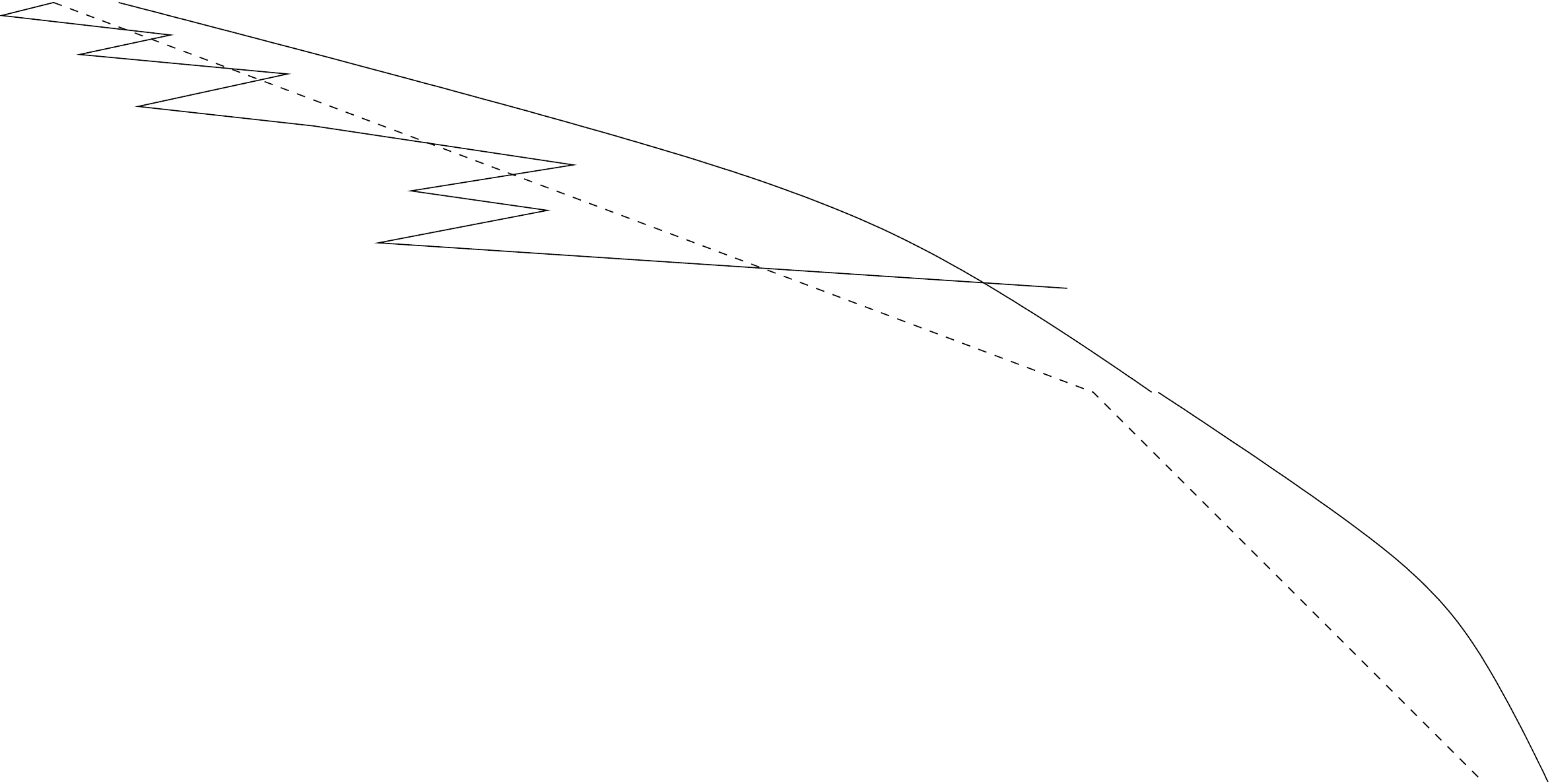}   }
    \\
    \subfigure[n/2 to 3n/4]
    {\includegraphics[scale=0.2]{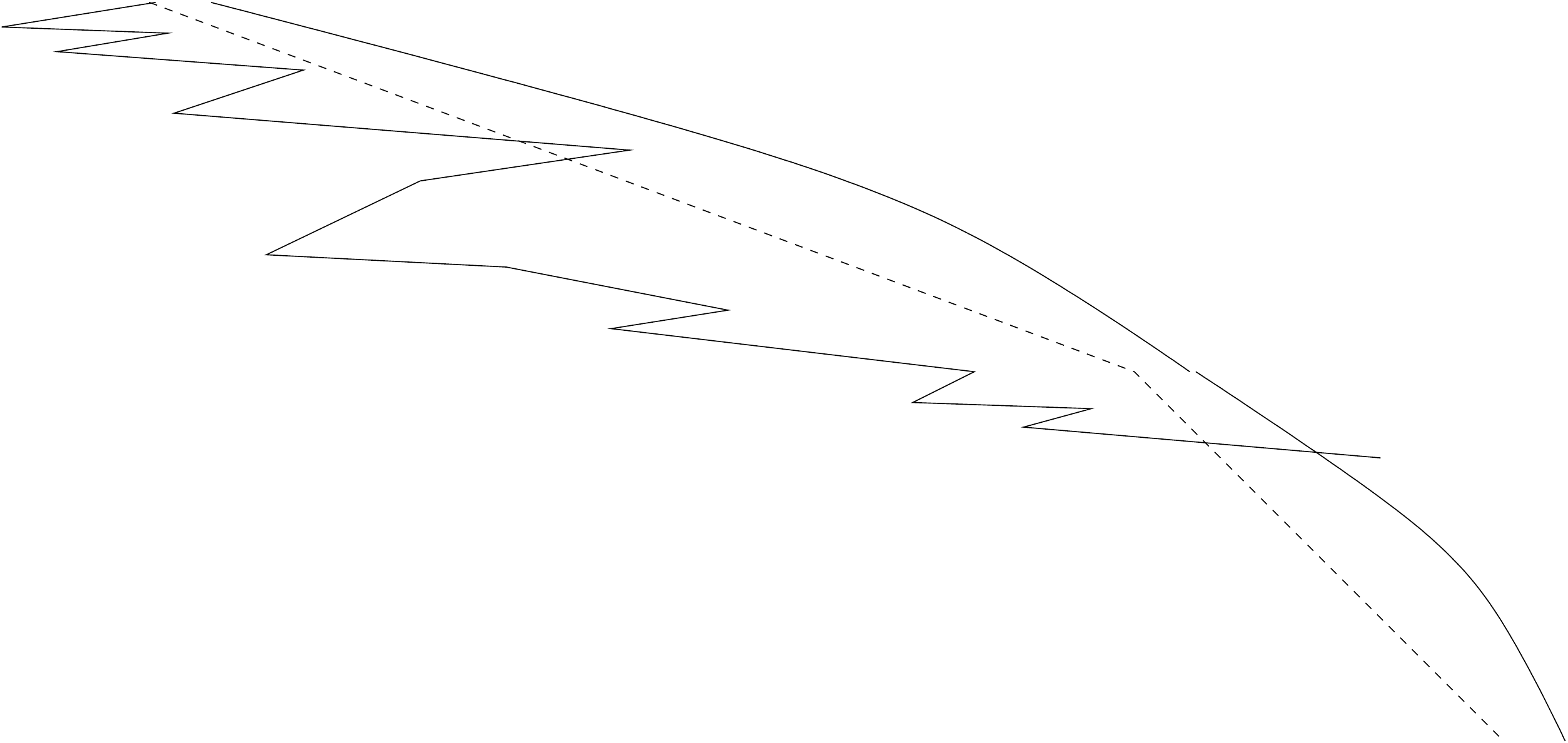}   }
    \subfigure[3n/4 to n]
    {\includegraphics[scale=0.2]{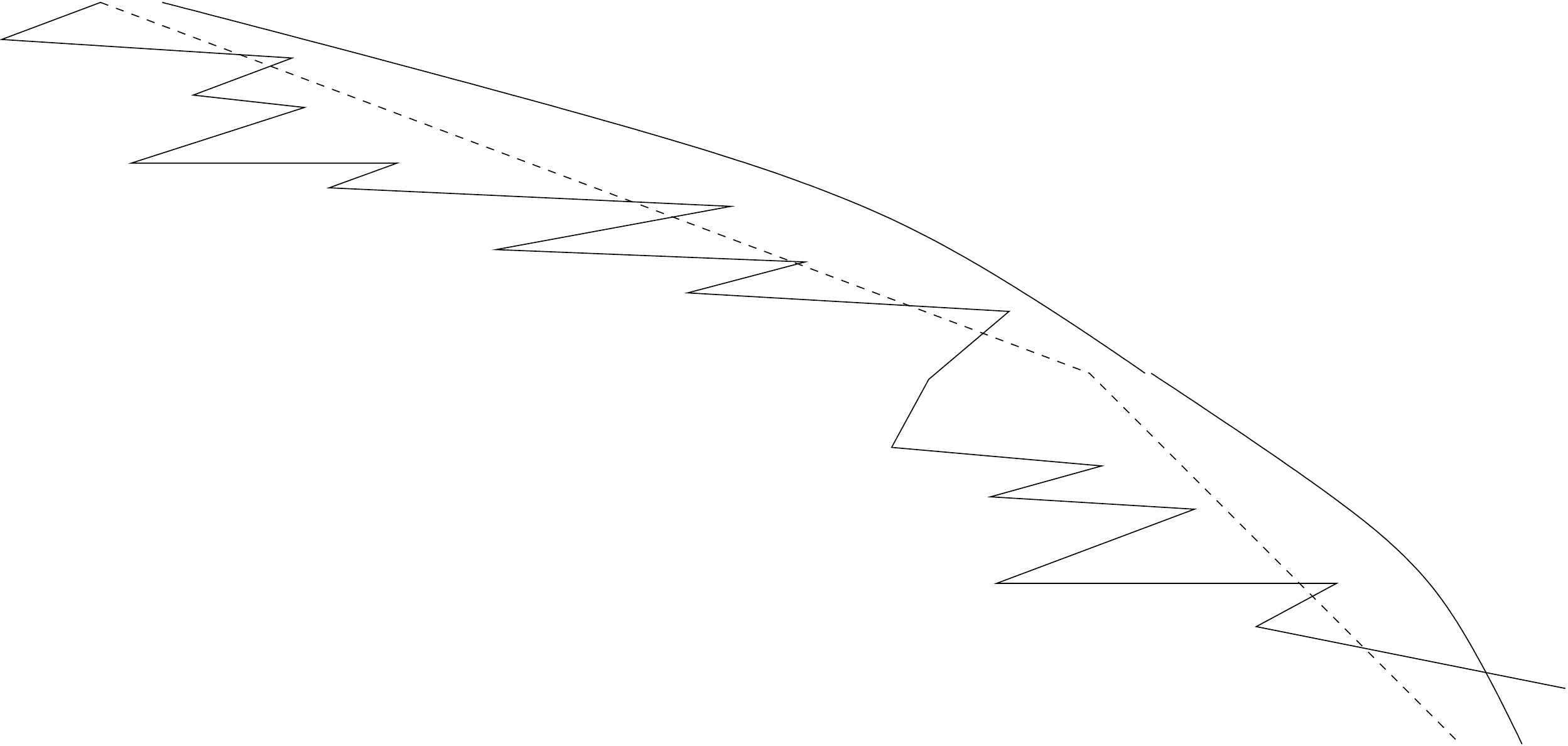} }
   \caption{Four small probability events. Dash line: $M(k)$. Solid curve: $M(k)+f(k)$.\newline Polygonal line: a random walk.}
   \label{figFour}
  \end{figure}

(\romannumeral1). When $1\leq k \leq n/4$, we have, by the Chebyshev's inequality,
        \begin{eqnarray*}
             &&P(\exists v\in\mathds{D}_n \mbox{ such that } S_v>M(n)+y,\tau_v=k)\\
             &\leq & P(\exists v\in\mathds{D}_k,\mbox{ such that } S_v>M(k)+f(k))\leq
              E\sum_{v\in\mathds{D}_k}1_{\{S_v>M(k)+f(k)\}}.
        \end{eqnarray*}
     The above expectation is less than or equal to
        \begin{eqnarray}\label{endptEst}
          \frac{C2^k} {\sqrt{k}}e^{-\frac{(M(k)+f(k))^2}{2\sigma_1^2}} &\leq &  \frac{C2^k}{\sqrt{k}}\exp\left(-\frac{\left(\sqrt{2\log 2}\sigma_1k +\frac{\sigma_1}{\sqrt{2\log 2}}\log k+y\right)^2}{2k\sigma_1^2}\right)\nonumber\\
           &\leq & Ck^{-3/2} e^{-\frac{\sqrt{2\log 2}}{\sigma_1}y}.
        \end{eqnarray}
     Summing these upper bounds over $k\in[1,n/4]$, we obtain that
     \begin{equation}\label{quad1}
     \sum_{k=1}^{n/4}P(\exists v\in\mathds{D}_n \mbox{ such that } S_v>M(n)+y,\tau_v=k)
     \leq Ce^{-\frac{\sqrt{2\log 2}}{\sigma_1}y}\sum_{k=1}^{\infty} k^{-3/2}.
     \end{equation}
     The right side of the above inequality can be made as small as we wish, say at most $\frac{1}{100}$, by choosing $y$ large enough.

(\romannumeral2).  When $n/4\leq k\leq n/2$, we again have, by Chebyshev's inequality,
        \begin{eqnarray*}
             &&P(\exists v\in\mathds{D}_n \mbox{ such that } S_v>M(n)+y,\tau_v=k)\\
             &\leq & P(\exists v\in\mathds{D}_k,\mbox{ such that } S_v>M(k)+f(k),
             \mbox{ and } S_{v^i}\leq M(i)+f(i)\mbox{ for }1\leq i\leq k)\\
             &\leq &
              E\sum_{v\in\mathds{D}_k}1_{\{S_v>M(k)+f(k), \mbox{ and } S_{v^i}\leq M(i)+f(i)\mbox{ for }1\leq i< k\}}.
        \end{eqnarray*}
      Letting $S_k$ be a copy of the random walks before time $n/2$, then the above expectation is equal to
      \begin{eqnarray}\label{quad2Prob}
        &&2^kP(S_k>M(k)+f(k), \mbox{ and } S_i\leq M(i)+f(i)\mbox{ for }1\leq i< k)\nonumber\\
        &\leq & 2^kP(S_k>M(k)+f(k), \mbox{ and } \frac{1}{\sigma_1} (S_i-\frac{i}{k}S_k)\leq \frac{1}{\sigma_1}(f(i)-\frac{i}{k}f(k))\mbox{ for }1\leq i\leq k).\nonumber\\
        &&
      \end{eqnarray}
      $\frac{1}{\sigma_1}(S_i-\frac{i}{k}S_k)$ is a discrete Brownian bridge and is independent of $S_k$. Because of this independence, the above quantity is less than or equal to
      $$2^kP(S_k>M(k)+f(k))\cdot P( \frac{1}{\sigma_1}(S_i-\frac{i}{k}S_k)\leq \frac{1}{\sigma_1}(f(i)-\frac{i}{k}f(k))\mbox{ for }1\leq i< k).$$

      The first probability can be estimated similarly to \eqref{endptEst},
      \begin{eqnarray}\label{quad2est1}
      &&P(S_k>M(k)+f(k))\nonumber\\
      &\leq & \frac{C}{\sqrt{k}}\exp\left(-\frac{\left(\sqrt{2\log 2}\sigma_1 k -\frac{3\sigma_1}{2\sqrt{2\log 2}}\log k+
                         \frac{5\sigma_1}{2\sqrt{2\log 2}}\log (\frac{n}{2}-k) +y\right)^2}{2k\sigma_1^2}\right)\nonumber\\
                         &\leq &  C2^{-k}k(\frac{n}{2}-k)^{-5/2}e^{-\frac{\sqrt{2\log 2}}{\sigma_1}y}.
      \end{eqnarray}

      To estimate the second probability, we first estimate $\frac{1}{\sigma_1}(f(i)-\frac{i}{k}f(k))$. It
      is less than or equal to $\frac{1}{\sigma_1}f(i)=\frac{y}{\sigma_1}+\frac{5}{2\sqrt{2\log 2}}\log i$ for $i\leq k/2<n/4$, and, for $k/2\leq i< k$, it is less than or equal to
      \begin{eqnarray*}
      &&\frac{5}{2\sqrt{2\log 2}}\log(n/2-i)-\frac{i}{k}\frac{5}{2\sqrt{2\log 2}}\log(n/2-k)+\frac{y}{\sigma_1}(1-\frac{i}{k})\\
      &=&\frac{5}{2\sqrt{2\log 2}}\left(\log(n/2-i)-\log(n/2-k)+\frac{k-i}{k}\log(n/2-k)\right)+\frac{y}{\sigma_1} (1-\frac{i}{k})\\
      &\leq &\frac{5}{2\sqrt{2\log 2}}\left(\log(k-i)+\frac{k-i}{k}\log k\right)+\frac{y}{\sigma_1}\leq 100\log(k-i)+\frac{y}{\sigma_1}.
      \end{eqnarray*}
      Therefore, applying Lemma \ref{lem_bb}, we have
      \begin{eqnarray}\label{bbEst}
      &&P\left(\frac{1}{\sigma_1} (S_i-\frac{i}{k}S_k)\leq \frac{1}{\sigma_1}(f(i)-\frac{i}{k}f(k))\mbox{ for }1\leq i\leq  k\right)\nonumber\\
      &\leq & P\left(\frac{1}{\sigma_1} (S_i-\frac{i}{k}S_k)\leq 100\log i+\frac{y}{\sigma_1}\mbox{ for }1\leq i\leq k/2,\mbox{ and }\frac{1}{\sigma_1} (S_i-\frac{i}{k}S_k)\leq\right. \nonumber\\
      &&\mbox{   } \left.100\log(k-i)+\frac{y}{\sigma_1} \mbox{ for } k/2\leq i\leq k\right)\leq C(1+y)^2/k,
      \end{eqnarray}
      where $C$ is independent of $n$, $k$ and $y$.

      By all the above estimates \eqref{quad2Prob}, \eqref{quad2est1} and \eqref{bbEst},
      \begin{equation}\label{quad2}
      \sum_{k=n/4}^{n/2}P(\exists v\in\mathds{D}_n \mbox{ such that } S_v>M(n)+y,\tau_v=k)
      \leq C(1+y)^2e^{-\frac{\sqrt{2\log 2}}{\sigma_1}y}\sum_{k=1}^{\infty} k^{-5/2}.
      \end{equation}
      This can again be made as small as we wish, say at most $\frac{1}{100}$, by choosing $y$ large enough.

(\romannumeral3). When $n/2\leq k\leq 3n/4$, we have
         \begin{eqnarray*}
             &&P(\exists v\in\mathds{D}_n \mbox{ such that } S_v>M(n)+y,\tau_v=k)\\
             &\leq & P(\exists v\in\mathds{D}_k\mbox{ such that } S_v>M(k)+f(k)
             \mbox{ and } S_{v^i}\leq M(i)+f(i)\mbox{ for }1\leq i\leq n/2)\\
             &\leq &
              E\sum_{v\in\mathds{D}_k}1_{\{S_v>M(k)+f(k), \mbox{ and } S_{v^i}\leq M(i)+f(i)\mbox{ for }1\leq i< n/2\}}.
        \end{eqnarray*}
      The above expectation is, by conditioning on $\{S_{v^{n/2}}=M(n)+x\}$,
         \begin{eqnarray}\label{quad3Int}
          &&2^k\int_{-\infty}^yP(S_{k-n/2}'>M(k)-M(n/2)+f(k)-x)\cdot\nonumber\\
          &&\;\;\;\;\;\;\cdot P(S_i-\frac{i}{n/2}S_{n/2}\leq f(i)-\frac{i}{k}x\mbox{ for }1\leq i< n/2)\cdot\nonumber\\
          && \;\;\;\;\;\;\cdot p_{S_{n/2}}(M(n/2)+x) dx,
         \end{eqnarray}
      where $S$ and $S'$ are two copies of the random walks before and after time $n/2$, respectively, and $p_{S_{n/2}}(x)$ is the density of $S_{n/2}\sim N(0,\frac{\sigma_1^2n}{2})$.

      We then estimate the three factors of the integrand separately.
      The first one, which is similar to \eqref{endptEst}, is bounded above by
      \begin{eqnarray*}
         &&P(S_{k-n/2}'>M(k)-M(n/2)+f(k)-x)\leq \frac{C}{\sqrt{k-n/2}} e^{-\frac{\left(M(k)-M(n/2)+f(k)-x\right)^2}{2(k-n/2)\sigma_2^2}}\\
         &\leq & C 2^{-(k-n/2)}(k-\frac{n}{2})^{-3/2}e^{-\frac{\sqrt{2\log 2}}{\sigma_2}(y-x)}.
      \end{eqnarray*}
      The second one, which is similar to \eqref{bbEst}, is estimated using Lemma \ref{lem_bb},
      \begin{equation}\label{bbEstHalfway}
         P(S_i-\frac{i}{n/2}S_{n/2}\leq f(i)-\frac{i}{k}x\mbox{ for }1\leq i< n/2)\leq  C(1+2y-x)^2/n.
      \end{equation}
      The third one is simply the normal density
      \begin{equation}\label{endptEstHalfway}
      p_{S_{n/2}}(M(n/2)+x)=\frac{C}{\sqrt{n}}e^{-\frac{(M(n/2)+x)^2}{n\sigma_1^2}}\leq C 2^{-n/2}n e^{-\frac{\sqrt{2\log 2}}{\sigma_1}x}.
      \end{equation}
      Therefore, the integral term \eqref{quad3Int} is no more than
      $$C(k-n/2)^{-3/2}e^{-\frac{\sqrt{2\log 2}}{\sigma_2}y}\int_{-\infty}^{y}(1+2y-x)^2e^{(\frac{\sqrt{2\log 2}}{\sigma_2}-\frac{\sqrt{2\log 2}}{\sigma_1})x}dx,$$
      which is less than or equal to $C(1+y)^2e^{-\frac{\sqrt{2\log 2}}{\sigma_1}y}(k-n/2)^{-3/2}$ since $\sigma_2<\sigma_1$.

      Summing these
      upper bounds together, we obtain that
      \begin{equation}\label{quad3}
      \sum_{k=n/2}^{3n/4}P(\exists v\in\mathds{D}_n \mbox{ such that } S_v>M(n)+y,\tau_v=k)\leq C(1+y)^2e^{-\frac{\sqrt{2\log 2}}{\sigma_1}y}\sum_{k=1}^{\infty}k^{-3/2}.
      \end{equation}
      This can again be made as small as we wish, say at most $\frac{1}{100}$, by choosing $y$ large enough.

(\romannumeral4). When $3n/4< k\leq n$, we have
         \begin{eqnarray*}
             &&P(\exists v\in\mathds{D}_n \mbox{ such that } S_v>M(n)+y,\tau_v=k)\\
             &\leq & P(\exists v\in\mathds{D}_k\mbox{ such that } S_v>M(k)+f(k),
             \mbox{ and } S_{v^i}\leq M(i)+f(i)\mbox{ for }1\leq i<k)\\
             &\leq &
              E\sum_{v\in\mathds{D}_k}1_{\{S_v>M(k)+f(k), \mbox{ and } S_{v^i}\leq M(i)+f(i),\mbox{ for }1\leq i< k\}}.
        \end{eqnarray*}
      The above expectation is, by conditioning on $\{S_{v^{n/2}}=M(n)+x\}$,
         \begin{eqnarray*}
          &&2^k\int_{-\infty}^yP(S_{k-n/2}'>M(k)-M(n/2)+f(k)-x,\\
          &&\;\;\;\;\;\;\;\;\;\;\;\;\;\;\;\; S_i'<M(i)-M(n/2)+f(i)-x, \mbox{ for }n/2<i\leq k)\\
          &&\;\;\;\;\;\;\cdot P(S_i-\frac{i}{n/2}S_{n/2}\leq f(i)-\frac{i}{k}x\mbox{ for }1\leq i< n/2)\cdot p_{S_{n/2}}(M(n/2)+x) dx
         \end{eqnarray*}
      where $S$ and $S'$ are copies of the random walks before and after time $n/2$, respectively.

      The second and third probabilities in the integral are already estimated in \eqref{bbEstHalfway} and \eqref{endptEstHalfway}. It remains to bound the first probability. Similar to \eqref{quad2Prob}, it is bounded above by
      \begin{eqnarray*}
      &&P\left(S_{k-n/2}'>M(k)-M(n/2)+f(k)-x, S_i'<M(i)-M(n/2)+f(i)-x,\right.\\
      &&\;\;\;\;\; \mbox{ for }n/2<i\leq k\Big)\leq C(1+2y-x)^2e^{-\frac{\sqrt{2\log 2}}{\sigma_2}(2y-x)}(n-k)^{-5/2}.
      \end{eqnarray*}

      With these estimates, we obtain in this case, in the same way as in (\romannumeral3), that
      \begin{equation}\label{quad4}
      \sum_{k=3n/4}^{n}P(\exists v\in\mathds{D}_n \mbox{ such that } S_v>M(n)+y,\tau_v=k)\leq C(1+y)^2e^{-\frac{\sqrt{2\log 2}}{\sigma_1}y}\sum_{k=1}^{\infty}k^{-5/2}.
      \end{equation}
      This can again be made as small as we wish, say at most $\frac{1}{100}$, by choosing $y$ large enough.

  Summing \eqref{quad1}, \eqref{quad2}, \eqref{quad3} and \eqref{quad4}, then \eqref{decUBsum} and thus \eqref{decUB} follow. This concludes the proof of Theorem \ref{th_dec}.
\end{proof}

\section{Further Remarks}

We state several immediate generalization and open questions related to binary branching random walks in time inhomogeneous environments where the diffusivity of the particles takes more than two distinct values as a function of time and changes macroscopically.

Results involving finitely many monotone variances can be obtained similarly to the results on two variances in the previous sections. Specifically, let $k\geq 2$ (constant) be the number of inhomogeneities, $\{\sigma_i^2>0:i=1,\dots,k\}$ be the set of variances and $\{t_i>0:i=1,\dots,k\}$, satisfying $\sum_{i=1}^{k}t_i=1$, denote the portions of time when $\sigma_i^2$ governs the diffusivity.
%the set of strictly positive constants with $\sum_{i=1}^{k}t_i=1$, denoting the portion of time when $\sigma_i^2$ governs the diffusivity.
Consider binary branching random walk up to time $n$, where the increments over the time interval $[\sum_{i=1}^{j-1}t_in,\sum_{i=1}^jt_in)$ are $N(0,\sigma_j^2)$ for $1\leq j\leq k$. When $\sigma_1^2<\sigma_2^2<\dots<\sigma_k^2$ are strictly increasing, by an argument similar to that in Section \ref{sec_inc}, the maximal displacement at time $n$, which behaves asymptotically like the maximum for independent random walks with effective variance $\sum_{i=1}^kt_i\sigma_i^2$, is
$$\sqrt{2(\log 2)\sum_{i=1}^kt_i\sigma_i^2}n-\frac{1}{2}\frac{\sqrt{\sum_{i=1}^kt_i\sigma_i^2}} {\sqrt{2\log 2}}\log n+O_P(1).$$
%Without much more effort than in Section $\ref{sec_dec}$, one can derive that,
When $\sigma_1^2>\sigma_2^2>\dots>\sigma_k^2$ are strictly decreasing, by an argument similar to that in Section \ref{sec_dec}, the maximal displacement at time $n$, which behaves like the sub-maximum chosen by the previous greedy strategy (see \eqref{subMax}), is
$$\sqrt{2\log 2}(\sum_{i=1}^kt_i\sigma_i)n-\frac{3}{2}(\sum_{i=1}^k\frac{\sigma_i}{\sqrt{2\log 2}})\log n+O_P(1).$$

Results on other inhomogeneous environments are open and are subjects of further study. We only discuss some of the non rigorous intuition in the rest of this section.

In the finitely many variances case, when $\{\sigma_i^2:i=1,\dots,k\}$ are not monotone in $i$, the analysis of maximal displacement could be case-by-case and a mixture of the previous monotone cases. The leading order term is surely a result of the optimization problem \eqref{LDPshort} from the large deviation. But, the second order term may depend on the fluctuation constraints of the path leading to the maximum, as in the monotone case. One could probably find hints on the fluctuation from the optimal curve solving \eqref{LDPshort}. In some segments, the path may behave like Brownian bridge (as in the decreasing variances case), and in some segments, the path may behave like a random walk (as in the increasing variances case).

%But one can expect to follow the pattern in Section \ref{sec_inc} and \ref{sec_dec}. %First, one can solve the general optimization problem \eqref{LDPshort} explicitly, and %find the leading (velocity) term and the optimal polyline reaching the maximum. Second, %one can solve \eqref{LDPshort} up to time $\sum_{i=1}^{j}t_in$ for $j\leq k$ to find %the maximal displacement at intermediate levels. Between levels where the polyline is %strictly left behind the intermediate maxima, the walks get enough room to fluctuate %and behave like independent walks. Third, by replacing those potion of walks by %independent walks with some effective variances, the question then reduces to the %problem with decreasing variances (NOT necessarily strictly decreasing, thus requiring %a little more work).

In the case where the number of different variances increases as the time $n$ increases, analysis seems more challenging. A special case is when the variances are decreasing, for example, at time $0\leq i\leq n$ the increment of the walk is $N(0,\sigma_{i,n}^2)$ with $\sigma_{i,n}^2=2-i/n$. The heuristics (from the finitely many decreasing variances case) seem to indicate that the path leading to the maximum at time $n$ cannot be left `significantly' behind the maxima at all intermediate levels. This path is a `rightmost' path. From the intuition of \cite{FangZeitouni10}, if the allowed fluctuation is of order $n^{\alpha}$ ($\alpha<1/2$), then the correction term is of order $n^{1-2\alpha}$, instead of $\log n$ in \eqref{uni_max}. However, the allowed fluctuation from the intermediate maxima, implicitly imposed by the variances, becomes complicated as the difference between the consecutive variances decreases to zero. A good understanding of this fluctuation may be a key to finding the correction term.

\bibliography{FangBib}

\end{document}